\definecolor{darkblue}{rgb}{0.0,0,0.7} 
\newcommand{\darkblue}{\color{darkblue}} 
\definecolor{darkred}{rgb}{0.7,0,0} 
\definecolor{lightgrey}{rgb}{0.7,0.7,0.7} 
\newtheorem{theorem}{Theorem}[section]
\newtheorem{proposition}[theorem]{Proposition}
\newtheorem{lemma}[theorem]{Lemma}
\theoremstyle{definition}
\newtheorem{definition}[theorem]{Definition}
\newtheorem{remark}[theorem]{Remark}
\newcommand{\projPerm}{{\mathfrak{P}}}
\newcommand{\projCamb}{{\mathfrak{C}}}
\newcommand{\sq}[1]{{\rm #1}} 
\newcommand{\Tri}{{\mathrm{T}}}
\newcommand{\wo}{{w_\circ}} 
\NewDocumentCommand\wom{O{m}}{{w_\circ^{#1}}} 
\newcommand{\F}{\mathcal{F}}
\newcommand{\V}{\mathcal{V}}
\newcommand{\sw}[2]{\sq{#1}(\sq{#2})} 
\NewDocumentCommand\cwo{O{c}}{\sw{\wo}{#1}} 
\NewDocumentCommand\cwotilde{O{c}}{{\widetilde{\sq{w}_\circ}}{(\sq{#1})}} 
\NewDocumentCommand\cwom{O{c}O{m}}{{\sq{w}_\circ^{#2}}{(\sq{#1})}} 
\DeclareMathOperator{\inv}{inv} 
\newcommand{\defn}[1]{\emph{\darkblue #1}} 
\renewcommand{\paragraph}[1]{\bigskip\noindent\textbf{#1.}} 
\NewDocumentCommand\Catm{O{m}}{{{\sf Cat}^{(#1)}}}
\newcommand\restr[2]{{
  \left.\kern-\nulldelimiterspace 
  #1 
  \vphantom{\big|} 
  \right|_{#2} 
  }}
\newcommand{\Weak}{{{\sf Weak}}}
\NewDocumentCommand\Weakm{O{m}}{{{\sf Weak}^{(#1)}}}
\NewDocumentCommand\Wm{O{m}}{{W}^{(#1)}}
\newcommand{\Assoc}{{{\sf Asso}}}
\NewDocumentCommand\Assocm{O{m}}{{{\sf Asso}^{(#1)}}}
\NewDocumentCommand\DeltaAssocm{O{m}}{{{\sf Asso}^{(#1)}_\Delta}}
\NewDocumentCommand\NablaAssocm{O{m}}{{{\sf Asso}^{(#1)}_\nabla}}
\newcommand{\Asoc}{{{\sf Asso}}}
\newcommand{\Po}{{\mathcal{P}}}
\newcommand{\Camb}{{{\sf Camb}}}
\newcommand{\Perm}{{{\sf Perm}}}
\NewDocumentCommand\Cambm{O{m}}{{{\sf Camb}^{(#1)}}}
\NewDocumentCommand\Cambsortm{O{m}}{{{\sf Camb}_\Sort^{(#1)}}}
\NewDocumentCommand\Cambncm{O{m}}{{{\sf Camb}_\NC^{(#1)}}}
\NewDocumentCommand\Cambassocm{O{m}}{{{\sf Camb}_\Assoc^{(#1)}}}
\NewDocumentCommand\deltaNCm{O{m}}{{{\sf NC}_{\delta}^{(#1)}}}
\NewDocumentCommand\DeltaNCm{O{m}}{{{\sf NC}_{\Delta}^{(#1)}}}
\newcommand{\Red}{{{\sf Red}}}
\newcommand{\NC}{{{\sf NC}}}
\NewDocumentCommand\NCm{O{m}}{{{\sf NC}^{(#1)}}}
\newcommand{\Sort}{{{\sf Sort}}}
\NewDocumentCommand\Sortm{O{m}}{{{\sf Sort}^{(#1)}}}
\NewDocumentCommand\deltaSortm{O{m}}{{{\sf Sort}_{{\sf shard}}^{(#1)}}}
\newcommand{\cov}{{{\sf cov}}}
\title{$W$-Associahedra are In-Your-Face}
\author[N.~Williams]{Nathan Williams}
\date{\today}
\keywords{Coxeter--Catalan combinatorics, Coxeter-sortable elements, cluster complex, associahedron, permutahedron}
\subjclass[2000]{Primary 05E45; Secondary 20F55, 13F60}
\begin{document}

\begin{abstract}
  \vspace*{-5pt}
  We use a projection argument to uniformly prove that $W$-permutahedra and $W$-associahedra have the property that if $v,v'$ are two vertices on the same face $f$, then any geodesic between $v$ and $v'$ does not leave $f$.  In type $A$, we show that our geometric projection recovers a slight modification of the combinatorial projection in~\cite{sleator1988rotation}. 
\end{abstract}

\maketitle

\section{Introduction}

The underlying graph of the Tamari lattice is the $1$-skeleton of a classical associahedron associated to the Coxeter group $A_n$.  This graph has vertices given by triangulations of a regular $(n+3)$-gon, with edges between two triangulations $T,T'$ if one can obtain $T'$ from $T$ by flipping a single diagonal.  In 1988, D.~Sleator, R.~Tarjan, and W.~Thurston used a projection argument to prove that if $T,T'$ share a common diagonal, then every shortest path between $T$ and $T'$ leaves this diagonal untouched~\cite{sleator1988rotation}.\footnote{C.~Ceballos and V.~Pilaud call this the ``non-leaving-face property'' in~\cite{ceballos2014diameter}.  We find our nomenclature a bit more on the nose.}  Using type-dependent combinatorial models for the $1$-skeleta of $W$-associahedra, this proof was recently extended by C.~Ceballos and V.~Pilaud to all classical types~\cite{ceballos2014diameter}.  C.~Ceballos and V.~Pilaud were able to check many exceptional types by computer, but were unable to finish the verification for $E_7$ (4160 vertices) and $E_8$ (25080 vertices).

In this note, we supply a short uniform proof of this result, drawing heavily on the beautiful theory of N.~Reading's Coxeter-sortable elements.  We first review the in-your-face property in Section~\ref{sec:inyourface}, summarizing the theory built in~\cite{sleator1988rotation,ceballos2014diameter}.  In Section~\ref{sec:permutahedron}, we then give a sesquipedalian proof that the $W$-permutahedron is in-your-face (Theorem~\ref{thm:perm_in_your_face}).  This provides the template for Section~\ref{sec:assoc} and the main theorem of this paper, in which we prove that the $W$-associahedron is in-your-face (Theorem~\ref{thm:assoc_in_your_face}).  The new idea we bring to this problem concerns how to define the projection---rather than work with type-specific combinatorial models, we define the projection geometrically.  In Section~\ref{sec:combinatorial_models}, we show that our geometric projection recovers a slight modification of the combinatorial projection in~\cite{sleator1988rotation} 

\subsection*{Acknowledgments}

I learned of this problem from T.~Br\"{u}stle at the 2015 Commutative Algebra meets Algebraic Combinatorics conference at Queen's University in Kingston, Ontario, where he presented a proof his summer students had developed in type $D_n$.  T.~Br\"{u}stle has kindly informed me that he and J.-F.~Marceau also have a uniform proof, using different methods~\cite{brustle2015}.  I would like to thank T.~Br\"{u}stle, M.~Guay-Paquet, A.~Pang, and H.~Thomas for useful conversations, C.~Ceballos and V.~Pilaud for their lucid presentation in~\cite{ceballos2014diameter}, and C.~Stump for helpful suggestions on presentation.

\section{The In-Your-Face Property}
\label{sec:inyourface}

Given a (finite) convex polytope $\Po$ with facets $\F=\{f\}_{f \in \F}$ and vertices $\V=\{v\}_{v \in \V}$, recall that its face poset is a finite, graded (by dimension), atomic and coatomic lattice.  That is, an arbitrary face $g$ may be uniquely specified by its set of containing facets \[\F(g)=\{f \in \F : g \subseteq f\},\] or by the set of vertices it contains \[\V(g)=\{v \in \V : v \subseteq g\}.\]  For two vertices $v,v'$, we write $v \to v'$ if they are adjacent on the $1$-skeleton of $\Po$ \textit{or} if $v = v'$.  A \defn{path} from the vertex $v$ to the vertex $v'$ is a sequence \[v=v_1 \to v_2 \to \cdots \to v_k=v'.\]  A \defn{geodesic} from vertices $v$ to $v'$ is a path of minimal length.

\begin{definition}[{~\cite[Definition 5]{ceballos2014diameter}}]
	A polytope $\Po$ is \defn{in-your-face} if for all vertices $v,v' \in \V$, any geodesic from $v$ to $v'$ stays in the minimal face containing both.
\end{definition}

We will not deviate from the original proof strategy of D.~Sleator, R.~Tarjan, and W.~Thurston in~\cite{sleator1988rotation}, which was formalized by C.~Ceballos and V.~Pilaud~\cite{ceballos2014diameter}.  Briefly, for each facet $f$, we define the notion of a ``normalization map'' $\phi_f: \V \to f$, and then show that the existence of such $\phi_f$ implies that $\Po$ is in-your-face. 

\begin{definition}[{~\cite[Lemma 3]{sleator1988rotation},~\cite[Proposition 9]{ceballos2014diameter}}]
\label{def:normalization_map}
	A \defn{normalization map} for the facet $f \in \F$ is a function $\phi_f: \V \to f$ such that:
	\begin{enumerate}
	\item  $\phi_f(v)=v$ for $v \in \V(f)$;
	\item  if $v \to v'$, then $\phi_f(v) \to \phi_f(v')$; and
	\item  if $v \to v'$ with $v \in f$ but $v' \not \in f$, then $\phi_f(v')=v$.
	\end{enumerate}
\end{definition}

\begin{lemma}[{~\cite[Lemma 3]{sleator1988rotation},~\cite[Propositions 8 and 9]{ceballos2014diameter}}]
\label{lem:norm_implies_in_your_face}
	If a normalization map exists for each facet $f \in \F,$ then $\Po$ is in-your-face.
\end{lemma}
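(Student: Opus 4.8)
The plan is to leverage each facet's normalization map to pin geodesics onto that facet, and then to intersect over all facets containing the two endpoints. First I would record the reduction to facets. For vertices $v,v'$, the minimal face $g$ containing both satisfies $g = \bigcap_{f \in \F \,:\, v,v' \in f} f$: in the face lattice the minimal face above two vertices is the intersection of the facets that contain it, and a facet contains this minimal face precisely when it contains both $v$ and $v'$ (using that $\Po$ is coatomic, so every face is the intersection of its containing facets). Hence it suffices to prove the following local claim: if $v,v' \in \V(f)$ for a facet $f$, then every geodesic from $v$ to $v'$ stays on $f$. Granting this for each facet containing both endpoints, any geodesic lies in the intersection of all such facets, which is exactly $g$.

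For the local claim, let $P : v = v_1 \to v_2 \to \cdots \to v_k = v'$ be a geodesic and push it through $\phi_f$. By property~(2), $\phi_f(P) : \phi_f(v_1) \to \cdots \to \phi_f(v_k)$ is again a path (recalling that $\to$ permits equality), and by property~(1) its endpoints are unchanged, $\phi_f(v_1) = v$ and $\phi_f(v_k) = v'$. Since each step $v_i \to v_{i+1}$ maps to a single step $\phi_f(v_i) \to \phi_f(v_{i+1})$ that is either an edge or a repeated vertex, the edge-length of $\phi_f(P)$ is at most that of $P$, and $\phi_f(P)$ is a path from $v$ to $v'$ lying entirely in $f$.

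The crux is to convert ``$P$ leaves $f$'' into a strict drop in length. Suppose some $v_i \notin f$. As $v_1 \in f$, there is a first exit edge, an index $i$ with $v_i \in f$ but $v_{i+1} \notin f$. Property~(3) then gives $\phi_f(v_{i+1}) = v_i$, while property~(1) gives $\phi_f(v_i) = v_i$; thus this step collapses to a repeated vertex and contributes $0$ to the length of $\phi_f(P)$. Consequently $\length(\phi_f(P)) \le k-2$, exhibiting a path from $v$ to $v'$ strictly shorter than the geodesic $P$, a contradiction. Therefore $P$ never leaves $f$, proving the local claim.

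I expect the only genuinely delicate point to be the bookkeeping around the convention that $v \to v'$ allows $v = v'$: this is exactly what makes $\phi_f(P)$ a legitimate (possibly stationary) walk whose edge-length is well defined and bounded by that of $P$, and it is what lets the single collapsed exit edge produce the strict inequality. The reduction to facets and the monotonicity properties~(1)--(2) are routine; property~(3) is the load-bearing hypothesis.
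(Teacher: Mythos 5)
Your proof is correct, and it is leaner than the paper's. Both arguments run on the same engine---push the geodesic through $\phi_f$, using property (1) to fix the endpoints, property (2) to get a step-or-stay image, and property (3) to collapse a boundary-crossing step---but the decomposition is genuinely different. The paper first proves an auxiliary rerouting statement (a geodesic from a vertex adjacent to $f$ to a vertex on $f$ can be replaced by one of equal length whose first step lands on $f$, obtained by prepending $v_0 \to \phi_f(v_0)$ and cancelling the duplication that property (3) forces at the entry step), and then splices the initial segment of the offending geodesic with the tail of the rerouted one, saving two steps. You instead project the entire geodesic once and observe that the first exit edge alone collapses, since $\phi_f(v_i)=v_i$ by (1) and $\phi_f(v_{i+1})=v_i$ by (3), so the image is a path from $v$ to $v'$ with strictly fewer edges than the geodesic---one collapse already yields the contradiction, with no splicing. (In fact the re-entry step collapses too, since $\to$ is symmetric; that second collapse is exactly where the paper's saving of two comes from, but you never need it.) Your explicit reduction of the minimal face to the intersection of the facets containing both endpoints, via coatomicity of the face lattice, is a correct expansion of what the paper dispatches in its final sentence, and your bookkeeping around stationary steps (a geodesic between distinct vertices has none, so edge-length strictly drops) is sound. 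What the paper's longer route buys is the rerouting claim itself, which is the form in which Sleator--Tarjan--Thurston and Ceballos--Pilaud apply normalization and is reusable elsewhere; your version buys brevity at no cost to the lemma as stated.
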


\begin{proof}
As in~\cite{sleator1988rotation,ceballos2014diameter}, we first show that if an element $v$ is adjacent (but not on) the facet $f$ containing $v'$, then we may find a geodesic between $v$ and $v'$ whose first step is onto the facet $f$.

To this end, let $v, v', v''$ be three vertices of $\Po$ such that $v''$ and $v$ are adjacent, $v''$ and $v'$ lie on the same face $f$, but $v$ is not on the face $f$.  We show that there is a geodesic between $v$ and $v'$ whose first step is $v \to v''.$

Let \[v=v_0 \to v_1 \to \cdots \to v_k=v'\] be a geodesic between $v$ and $v'$.  Then the sequence (of length one greater than the original sequence) 
\[v=v_0 \to \phi_f(v_0) \to \phi_f(v_1) \to \ldots \to \phi_f(v_k)\]
is actually a path between $v$ and $v'$:
\begin{itemize}
	\item by Property (1) of Definition~\ref{def:normalization_map}, the last element is $v_k$;
	\item by Property (2), the consecutive pairs besides the first are either adjacent or equal; and
	\item by Property (3), the first consecutive pair is adjacent, since we assumed that $v$ is adjacent to a $v'' \in f$, so that $\phi_f(v_0)=v''$.
\end{itemize}

Since the first vertex $v$ was not in the face $f$, but our final vertex $v'$ was in $f$, we passed onto the face $f$ at some step $v_i \to v_{i+1}$ in the original geodesic.  Then $\phi_f(v_i)=\phi_f(v_{i+1})$, and we can remove this duplication to obtain a new geodesic whose first step is the edge $v \to v''.$

We now show that a geodesic between $v$ and $v'$ cannot leave a common facet.  Suppose that $v$ and $v'$ lie on a common facet $f$, and suppose we have a geodesic
\[v=v_0 \to v_1 \to \cdots \to v_k=v'\]
that leaves $f$.  Let $i$ be the minimal index so that the vertex $v_i$ in the geodesic is not on $f$.  By the previous argument, we have that the geodesic from $v_i$ to $v_k$ \[v_i \to v_{i+1} \to \cdots \to v_k\] may be normalized to a new geodesic from $v_i$ to $v_k$ \[v_i=v'_i \to v'_{i+1} \to v'_{i+2} \to \cdots \to v'_k=v_k,\] such that $v'_{i+1}=\phi_f(v_{i+1})$ lies on the face $f$.  By assumption $v_{i-1} \to v_i$ left the face $f$, so by Property (2) we conclude that $\phi_f(v_{i+1})=v_{i-1}=v'_{i+1}.$

Then we can merge the initial segment of the original geodesic from $v=v_0$ to $v_{i-1}$ with the final segment of the new geodesic from $v_{i-1}$ to $v'=v'_k$, so that \[v=v_0 \to v_1 \to \cdots \to v_{i-1} \to v'_{i+2} \to \cdots \to v'_k=v'\] is a path from $v$ to $v'$ of length at least two shorter than the original supposed geodesic.

Finally, since this argument holds for any facet $f$ containing both $v$ and $v'$, it holds true for the minimal face containing both.
\end{proof}

\section{The Permutahedron}
\label{sec:permutahedron}

In this section, we use the framework of the previous section to prove that the $W$-permutahedron is in-your-face.  Although this result can be proven much more quickly and intuitively, we feel justified in this long-winded approach because we will reuse the same ideas in Section~\ref{sec:assoc} for the $W$-associahedron.

Given a \defn{finite Coxeter system} $(W,S)$ with $J \subseteq S$ a subset of the \defn{simple reflections} $S$, the \defn{standard parabolic subgroup} $W_J$ is the Coxeter group generated by $J$, and the \defn{parabolic quotient} is the weak order interval $\{w \in W : sw > w \text{ for all } s \in J\}$.  If $S \setminus J = \{s\}$, we write $W_{\langle s \rangle}$ and $W^{\langle s \rangle}$ for $W_J$ and $W^J$.  
The \defn{length} of $w \in W$ is the length $\ell(w)$ of a shortest expression for $w$ as a product of simple reflections; we write $\Red(w)$ for the set of all such shortest expressions, and call them \defn{reduced words}.  The \defn{weak order} $\Weak(W)$ is the lattice given by relations $u \leq v$ iff $\ell(u)+\ell(u^{-1}v)=\ell(v)$~\cite[Theorem 3.2.1]{bjorner2006combinatorics}.  Recall that we may choose a system of \defn{simple roots} $\Delta(W)$ which determine the \defn{positive roots} $\Phi^+(W).$  We may order $\Phi^+(W)$ by $\alpha<\beta$ iff $\beta-\alpha$ is a nonnegative sum over elements of $\Delta$.  This order has a \defn{highest root}.  We associate to $w \in W$ its \defn{inversion set} $\inv(w)=\left(-w(\Phi^+(W))\right)\cap \Phi^+(W),$ and recall that $W$ has a \defn{longest element} $w_o$ whose inversion set is all of $\Phi^+(W)$.  We will use the notation $u^v:=vuv^{-1}$, and we write $n:=|S|$ and $N:=|\Phi^+(W)|$.

All our examples will be in type $A_3$; in these examples, we label the simple reflections $s_1,s_2,$ and $s_3$, so that $(s_1s_2)^3=(s_2s_3)^3=(s_1s_3)^2=e$.

\begin{proposition}
	\label{prop:proj_unique}
	We recall the following facts with reference but without proof.
\begin{itemize}  	
	\item Every $w \in W$ has a unique factorization $w = w_J \cdot w^J$ with $w_J \in W_J$ and $w^J \in W^J$~\cite[Proposition 2.4.4]{bjorner2006combinatorics}.
	\item The projection from $W \to W_J$ defined by $w \mapsto w_J$ is a lattice homomorphism from $\Weak(W)$ to $\Weak(W_J)$~\cite[Corollary 6.10]{reading2004lattice}.
\end{itemize}
\end{proposition}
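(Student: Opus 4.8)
The statement records two standard facts about parabolic subgroups, and I would prove both by passing to inversion sets, where the projection $w \mapsto w_J$ becomes transparent. Throughout I use the dictionary, valid for the conventions fixed in the excerpt, that $u \le v$ in $\Weak(W)$ if and only if $\inv(u) \subseteq \inv(v)$, that $|\inv(w)| = \length(w)$, and that the inversion sets are exactly the \emph{biconvex} subsets of $\Phi^+(W)$ (both the set and its complement closed under taking positive roots that arise as positive sums). The single combinatorial lemma I lean on repeatedly is that a biconvex set containing a positive root $\alpha$ must contain a simple root lying in the support of $\alpha$, proved by repeatedly splitting $\alpha = \alpha_s + (\alpha - \alpha_s)$ with $\alpha - \alpha_s \in \Phi^+$ and applying coclosedness.

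For the factorization I would define $w^J$ to be the minimal-length element of the coset $W_J w$ and set $w_J := w(w^J)^{-1} \in W_J$. Minimality forces $\length(s w^J) > \length(w^J)$ for every $s \in J$, so $w^J \in W^J$; and since any two elements of $W^J$ in a common coset are both minimal, they coincide, giving uniqueness. The membership $w^J \in W^J$ means $\inv(w^J) \cap \Delta_J = \emptyset$ (no left descent in $J$), and the biconvexity lemma upgrades this to $\inv(w^J) \cap \Phi_J^+ = \emptyset$, where $\Phi_J^+ := \Phi^+ \cap \operatorname{span}(\Delta_J)$. Length additivity $\length(w) = \length(w_J) + \length(w^J)$ then follows, completing the first bullet.

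That same lemma powers the second bullet. From the cocycle $\inv(w) = \inv(w_J) \sqcup w_J\,\inv(w^J)$ (a disjoint union because the lengths add), together with $\inv(w_J) \subseteq \Phi_J^+$, the standard fact that each $W_J$ permutes $\Phi^+ \setminus \Phi_J^+$, and $\inv(w^J) \cap \Phi_J^+ = \emptyset$, I obtain the clean identity
\[
\inv(w_J) = \inv(w) \cap \Phi_J^+ .
\]
Under the bijection $w \leftrightarrow \inv(w)$ the projection $\pi_J$ is thus simply intersection with $\Phi_J^+$. Monotonicity is then immediate, and $\pi_J|_{W_J} = \mathrm{id}$ exhibits $\pi_J$ as an order-preserving retraction onto $\Weak(W_J)$.

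It remains to show $\pi_J$ preserves $\meet$ and $\join$, and this is where the real difficulty lies: joins in weak order are \emph{not} unions of inversion sets, since $\inv(u \join v)$ is the biconvex closure of $\inv(u) \cup \inv(v)$, and dually for meets. Preserving joins therefore amounts to proving that taking biconvex closure in $\Phi^+$ and then intersecting with $\Phi_J^+$ agrees with first intersecting and then closing inside $\Phi_J^+$. The inclusion $\pi_J(u \join v) \ge \pi_J(u) \join \pi_J(v)$ is formal from monotonicity; the reverse inclusion—that the closure forces no unauthorized inversions inside $\Phi_J^+$—is the crux, and I expect it to be the main obstacle. The way I would ultimately resolve it is Reading's: show that the fibers of $\pi_J$ are the classes of a lattice congruence of $\Weak(W)$ (each fiber an interval, the relation compatible with $\meet$ and $\join$), so that $\pi_J$ is the associated quotient homomorphism. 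This is exactly the content cited from~\cite{reading2004lattice}, and rather than rebuild the shard/congruence theory I would invoke that machinery.
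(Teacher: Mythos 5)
There is nothing in the paper to diverge from here: the proposition is explicitly labeled as recalling facts ``with reference but without proof,'' citing \cite[Proposition 2.4.4]{bjorner2006combinatorics} for the factorization and \cite[Corollary 6.10]{reading2004lattice} for the lattice-homomorphism statement. Your sketch is sound where it is explicit, and it supplies genuine content the paper omits: the minimal-coset-representative construction of $w^J$, the biconvexity lemma upgrading ``no left descent in $J$'' to $\inv(w^J)\cap\Phi_J^+=\emptyset$, and the resulting identity $\inv(w_J)=\inv(w)\cap\Phi_J^+$ are all correct---and that last identity is exactly the geometric picture the paper invokes informally in its figure captions, where $w_{\langle s\rangle}$ is described as the unique element of the parabolic lying in the same connected region as $w$. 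You also correctly isolate the one genuinely nontrivial claim, namely preservation of $\meet$ and $\join$ (equivalently, compatibility of biconvex closure with intersection against $\Phi_J^+$), and you defer it to precisely the congruence machinery of \cite{reading2004lattice}; since that is the paper's own citation, this is a matched appeal to the literature rather than a gap. One small ordering quibble: your uniqueness argument (``any two elements of $W^J$ in a common coset are both minimal, so they coincide'') does not stand on its own---a coset could a priori contain two distinct minimal-length elements---and in fact rides on the length additivity $\ell(xu)=\ell(x)+\ell(u)$ for $x\in W_J$, $u\in W^J$, which you only establish afterwards (and which itself uses the biconvexity upgrade, via $\inv(xu)=\inv(x)\sqcup x\,\inv(u)$ with $x$ permuting $\Phi^+\setminus\Phi_J^+$). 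Reordering so that additivity comes first, with uniqueness as its corollary, would make the sketch airtight.
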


We define the \defn{$W$-permutahedron} $\Perm(W)$ to be the convex hull of $W$-orbit of its highest root.  It is well-known that the vertices of $\Perm(W)$ are in bijection with elements $w \in W$, its faces of are parametrized by the cosets $W/W_J$ for $J\subseteq S,$ and its facets are parametrized by the translations of maximal parabolic subgroups $\{w W_{\langle s\rangle} : s \in S, w \in W\}$.

An example is given in Figure~\ref{fig:permpica3}.

\begin{figure}[htbp]
	\begin{center}
		\includegraphics[width=3in]{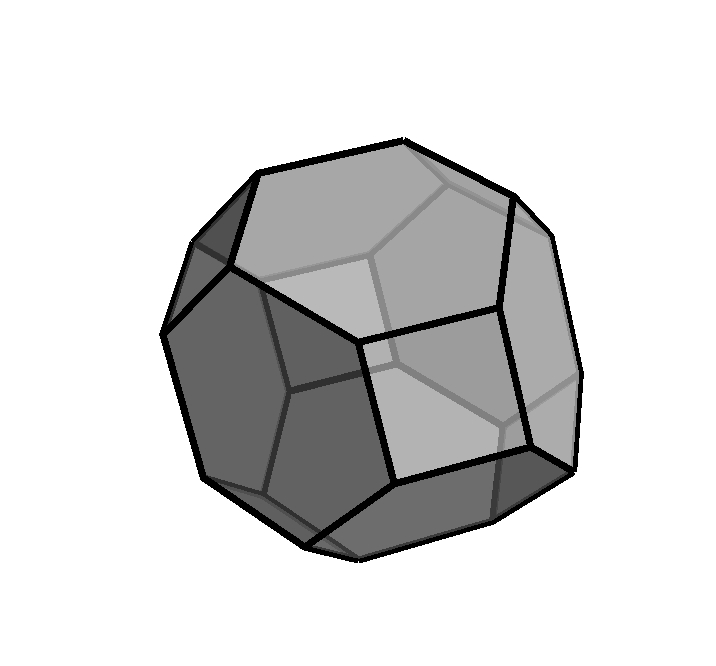}
	\end{center}
	\caption{The permutahedron $\Perm(A_3)$.  The 24 elements of $A_3$ correspond to the 24 vertices.  When correctly oriented, the $1$-skeleton of $\Perm(A_3)$ is isomorphic to $\Weak(A_3)$.}
	\label{fig:permpica3}
\end{figure}

\begin{theorem}
	\label{thm:perm_in_your_face}
The $W$-permutahedron is in-your-face.
\end{theorem}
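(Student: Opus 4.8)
The plan is to apply Lemma~\ref{lem:norm_implies_in_your_face}: it suffices to exhibit a normalization map $\phi_f \colon \V \to f$ for every facet $f$ of $\Perm(W)$. Since the facets are the cosets $w W_{\langle s \rangle}$, and each $g \in W$ acts on $\Perm(W)$ as an isometry carrying the vertex labeled $u$ to the vertex labeled $gu$ and the facet $W_{\langle s \rangle}$ to $g W_{\langle s \rangle}$, I would first reduce to the \emph{base} facets $f = W_J$ with $J = S \setminus \{s\}$, whose vertices are exactly the elements of $W_J$. Given a normalization map $\phi_{W_J}$ for such a base facet, conjugating by this symmetry, i.e. setting $\phi_{gW_J}(u) := g\,\phi_{W_J}(g^{-1}u)$, produces one for $gW_J$: all three axioms of Definition~\ref{def:normalization_map} are phrased purely in terms of adjacency and facet membership, both of which the isometry preserves.

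For the base facet $f = W_J$ I would take $\phi_f$ to be the parabolic projection $w \mapsto w_J$ from Proposition~\ref{prop:proj_unique}. Axiom (1) is immediate, since $v \in W_J$ factors as $v = v \cdot e$, so $v_J = v$. Axiom (2) requires that adjacent or equal vertices map to adjacent or equal vertices; because the edges of $\Perm(W)$ are the cover relations of $\Weak(W)$ and $w \mapsto w_J$ is a lattice homomorphism $\Weak(W) \to \Weak(W_J)$, this reduces to the standard fact that the quotient of a finite lattice by a congruence sends each cover to a cover or to an equality. This is the point I would be most careful to justify, as a general lattice homomorphism need not preserve covers.

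The crux is Axiom (3): if $v \in W_J$, $v' \notin W_J$, and $v \to v'$, then $v'_J = v$. Adjacency in $\Perm(W)$ means $v' = vs$ for a simple reflection $s$ with $\ell(v') = \ell(v) \pm 1$; since every right descent of $v \in W_J$ lies in $J$, the reflection $s \notin J$ must be a right ascent, so $s$ is the unique element of $S \setminus J$ and $\ell(vs) = \ell(v)+1$. As a simple reflection outside $J$, this $s$ has its only left descent at $s \notin J$, hence $s \in W^J$; thus $v' = v \cdot s$ is a length-additive factorization with $v \in W_J$ and $s \in W^J$, and uniqueness of the factorization $w = w_J \cdot w^J$ gives $v'_J = v$, as desired. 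Having verified all three axioms for every base facet and transported them to all facets by the symmetry reduction, Lemma~\ref{lem:norm_implies_in_your_face} yields the theorem. I expect the descent bookkeeping in Axiom (3) and the cover-preservation needed for Axiom (2) to be the only points requiring genuine care; everything else is formal.
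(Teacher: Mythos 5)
Your proposal is correct and takes essentially the same route as the paper: the same reduction to the base facets $W_{\langle s \rangle}$ via left multiplication, the same normalization map $u \mapsto u_{\langle s \rangle}$ justified by Proposition~\ref{prop:proj_unique}, and the same verification of axioms (1) and (3) via uniqueness of the parabolic factorization, concluding by Lemma~\ref{lem:norm_implies_in_your_face}. The only (harmless) divergence is in axiom (2), where the paper argues directly with the exchange lemma---writing $u = u_{\langle s \rangle}u^{\langle s \rangle}$ and checking whether $t$ deletes a reflection from the parabolic part or the quotient part---whereas you invoke the standard fact that a congruence quotient of a finite lattice sends covers to covers or equalities; your version is valid since the surjective lattice homomorphism $w \mapsto w_{\langle s \rangle}$ realizes $\Weak(W_{\langle s \rangle})$ as such a quotient.
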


\begin{proof}
Suppose that $w$ and $w'$ lie in a common facet $f$.  By multiplying $W$ by $w^{-1}$, we orient the permutahedron so that this facet coincides with a maximal parabolic subgroup $W_{\langle s \rangle}$ of $W$.  Since $u \mapsto w^{-1}u$ preserves all edges of the permutahedron, we are reduced to considering only facets of the form $f=W_{\langle s \rangle}$.

In this way, the normalization map naturally suggests itself: we define it as the projection \[\projPerm_{W_{\langle s \rangle}}: W \to W_{\langle s \rangle}\] by \[\projPerm_{W_{\langle s \rangle}}(u) = u_{\langle s \rangle}.\]  By Proposition~\ref{prop:proj_unique}, $\projPerm_{W_{\langle s \rangle}}$ is clearly a map with image $W_{\langle s \rangle}$.  We now check that $\projPerm_{W_{\langle s \rangle}}$ is a normalization map.

\begin{enumerate}
	\item We immediately conclude that $\projPerm_{W_{\langle s \rangle}}(u)=u$ for $u \in W_{\langle s \rangle}$, since $u = u_{\langle s \rangle}$ when $u \in W_{\langle s \rangle}$.
	\item We now check that if $u \to u'$, then $\projPerm_{W_{\langle s \rangle}}(u) \to \projPerm_{W_{\langle s \rangle}}(u')$.  We may write the given edge $u \to u'$ as $u \to ut$, where $ut<u$ and $t\in S$.  Write $u = u_{\langle s \rangle} u^{\langle s \rangle}$; by the exchange lemma, $t$ removes a reflection from either $u_{\langle s \rangle}$ or from $u^{\langle s \rangle}.$  In the former case, the edge is preserved: $u \to ut$ is sent to $u \to u_{\langle s \rangle} t'$, where $t'=t^{u^{\langle s \rangle}}$.  In the latter case, the edge is contracted.

	\item Finally, we ensure that if $u \to u'$ with $u \in W_{\langle s \rangle}$ and $u' \not \in W_{\langle s \rangle}$, then $\projPerm_{W_{\langle s \rangle}}(u')=u$.  We write the given edge $u \to u'$ as $u \to us$, for $u \in W_{\langle s \rangle}$.  Since $s$ does not appear in $u$, $u \cdot s$ is the parabolic decomposition of $us$ with respect to $S \setminus \{s\}$, so that $(us)_{\langle s \rangle} = u$.
\end{enumerate}

Since $\projPerm_{W_{\langle s \rangle}}$ is a normalization map, we conclude the Theorem by Lemma~\ref{lem:norm_implies_in_your_face}.
\end{proof}

An example of this projection is given in Figure~\ref{fig:perma3}.

\begin{figure}[htbp]
	\begin{center}
		\includegraphics[width=.45\textwidth]{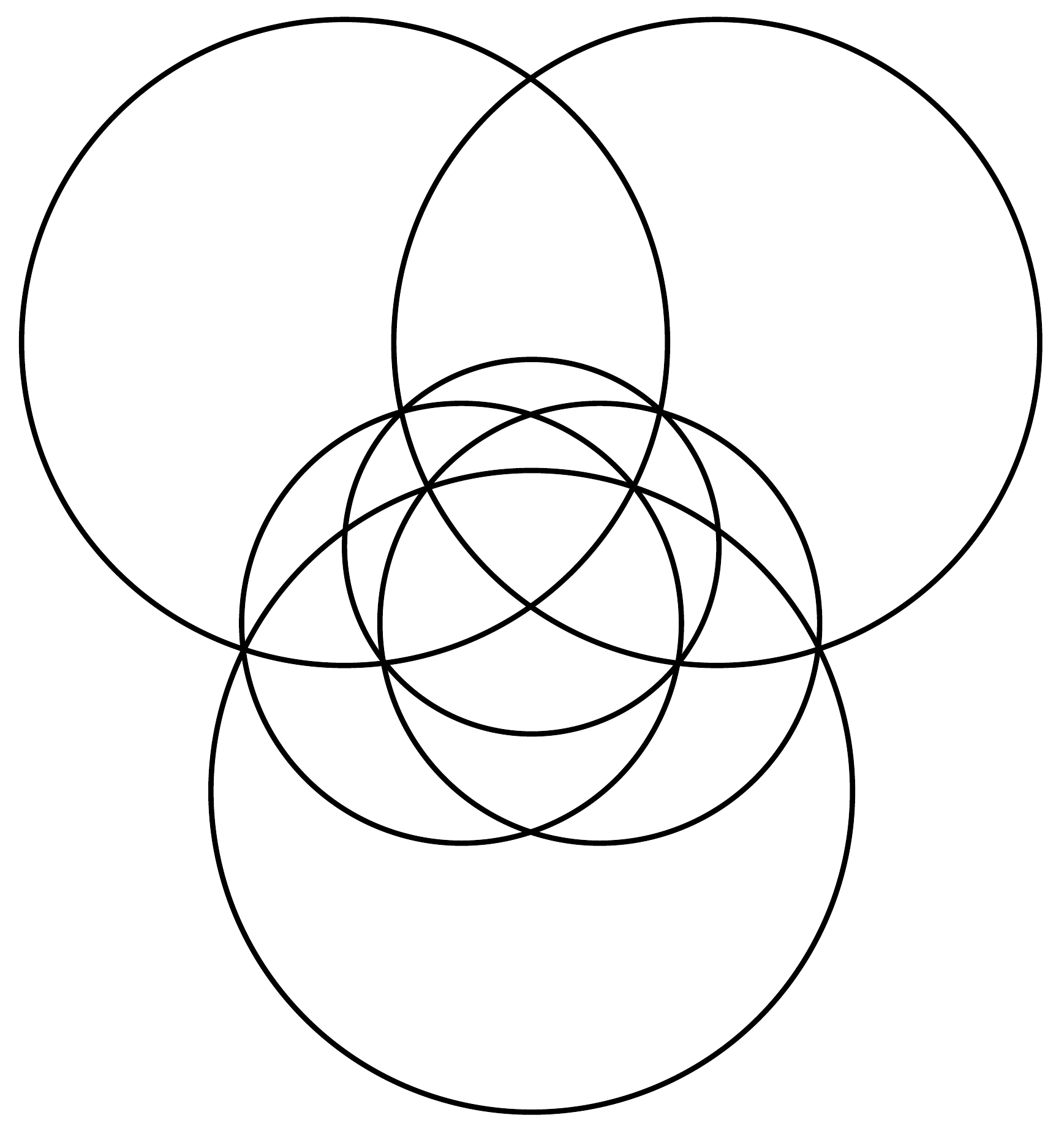} \hspace{1em}
		\includegraphics[width=.45\textwidth]{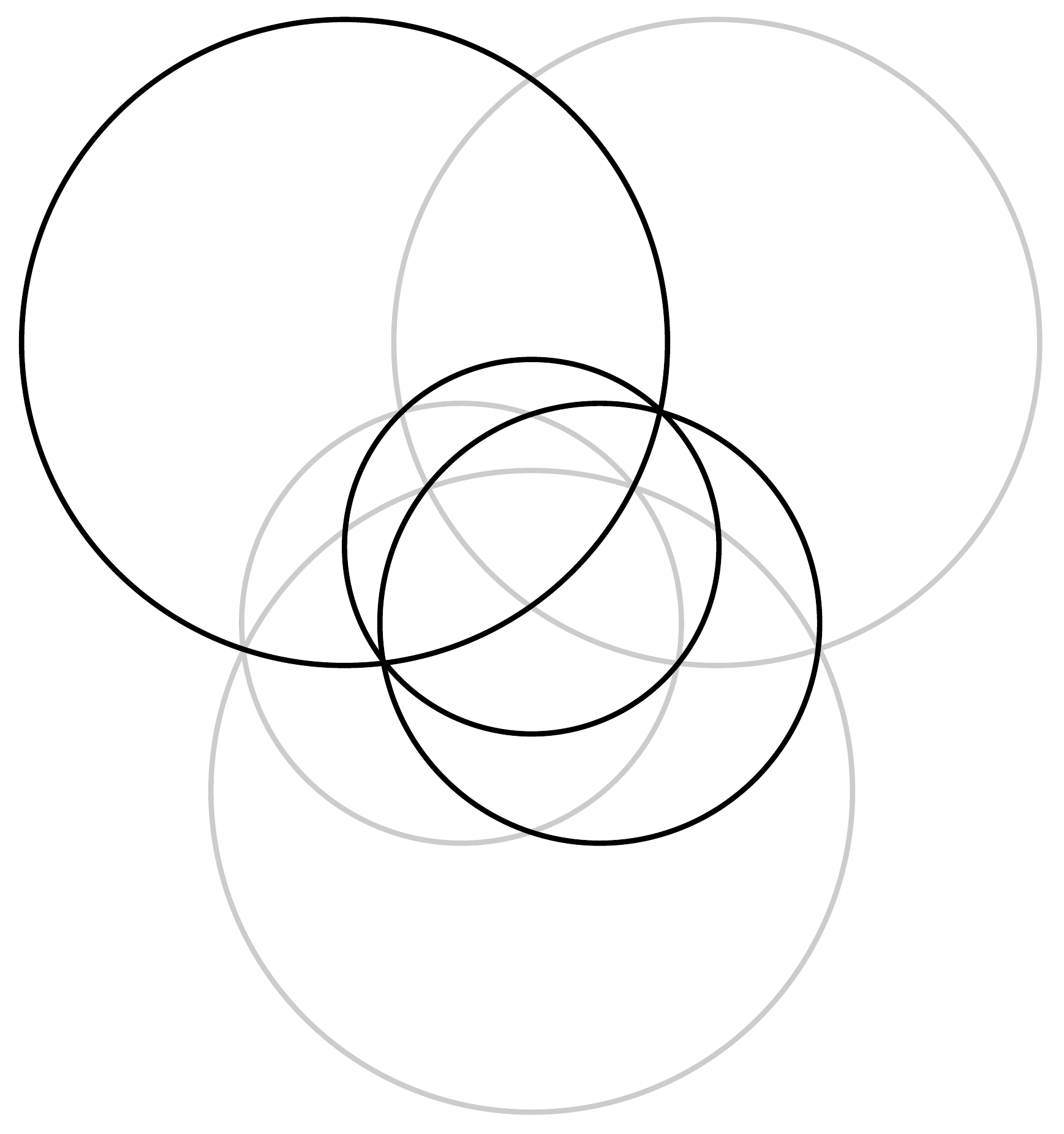}
	\end{center}
	\caption{The picture on the left is the stereographic projection of the intersection of the hyperplane arrangement of type $A_3$ with a sphere (this is the dual of Figure~\ref{fig:permpica3}).  The six hyperplanes correspond to the six circles, and the 24 elements of $A_3$ correspond to the 24 connected regions---the identity element of $A_3$ corresponds to the central triangle, while the longest element corresponds to the unbounded region.   The picture on the right is the restriction of $A_3$ to its maximal parabolic subgroup generated by the reflections $s_2$ and $s_3$.  The projection map sends an element $w$ of $A_3$ to the unique element $w_{\langle s \rangle}$ of the parabolic subgroup $(A_3)_{\langle s_1 \rangle}$ that lies in the same connected region as $w$.}
	\label{fig:perma3}
\end{figure}

\section{The Associahedron}
\label{sec:assoc}

In this section, we will show that the $W$-associahedron is in-your-face.  The proof will follow exactly the same template as our proof for the $W$-permutahedron: we will describe facets, explain how to rotate these facets so that they lie in a parabolic subgroup of $W$, and then define the normalization map by projecting to this parabolic subgroup.

To make this program as smooth as possible, we require two different $(W,c)$-Catalan objects: subword complexes (on which it will be easy to describe the face structure of the $W$-associahedron, its $1$-skeleton, and rotation), and sortable elements (on which it will be easy to understand parabolic subgroups and the normalization map).

\subsection{Subwords and Facets of the Associahedron}

Although there are many different realizations of $W$-associahedra, we only require a description of its faces and of its $1$-skeleton.  We therefore will not attempt to summarize the literature---for more information, the interested reader should consult~\cite{ceballos2011many,hohlweg2011permutahedra}.

A simple description of the faces may be given using the work on subword complexes by J.~P.~Labb\'e, C.~Ceballos, and C.~Stump~\cite{ceballos2014subword}, and by V.~Pilaud and C.~Stump~\cite{pilaud2011brick}.  A (standard) \defn{Coxeter element} $c = s_1 s_2 \cdots s_{n}$ is a product of the simple reflections, each occurring exactly once, in any order. 

Fix a Coxeter element $c$ and choose any reduced word $\mathsf{c}\in \Red(c)$ (this choice is immaterial, since any two such words will agree up to commutations).  The \defn{$c$-sorting word} $\mathsf{w(c)} \in \Red(w)$ of $w$ is the lexicographically first (in position) reduced $S$-subword for $w$ of the word \begin{displaymath}\mathsf{c}^\infty=\Big(s_1 s_2 \cdots s_n \hspace{1ex}|\hspace{1ex} s_1 s_2 \cdots s_n \hspace{1ex}|\hspace{1ex} s_1 s_2 \cdots s_n \hspace{1ex}|\hspace{1ex} \cdots \Big).\end{displaymath}

For example, in type $A_3$ for $c=s_1 s_2 s_3$, $\mathsf{w_o(c)}=s_1 s_2 s_3 | s_1 s_2 \cdot | s_1$.

\begin{definition}[\cite{ceballos2014subword}]
	Fix the word in simple reflections \[\mathsf{Q}=\left(\mathsf{Q}_1,\mathsf{Q}_2,\ldots,\mathsf{Q}_{N+n}\right):=\mathsf{c w_o(c)}.\]  The $(W,c)$-subword complex $\Asoc(W,c)$ consists of all subwords of $\mathsf{Q}$ whose complement contains a reduced word for $w_o$.
\end{definition}

The vertices of the $W$-associahedron are in bijection with those subwords of $\Asoc(W,c)$ with $n$ letters.  Two vertices~$w$ and~$w'$ of $\Asoc(W,c)$ are connected by a \defn{flip} if their corresponding subwords differ in a single position.  The \defn{flip graph} for $\Asoc(W,c)$ is the graph generated by flips, and it is isomorphic to the $1$-skeleton of the $W$-associahedron~\cite{pilaud2011brick}.  An example of the flip graph for $\Asoc(A_3,s_1s_2s_3)$ is given in Figure~\ref{fig:flipgraph}.

\begin{theorem}[\cite{ceballos2014subword,pilaud2011brick}]
In the language of $\Asoc(W,c)$, the faces of the $W$-associahedron are given by those subwords fixing a chosen collection of letters of $\mathsf{Q}$, and its facets are parametrized by single letters of $\mathsf{Q}$.
\end{theorem}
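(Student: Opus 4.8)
The plan is to realize $\Asoc(W,c)$ as the boundary complex of the polar dual of the $W$-associahedron, so that the asserted descriptions of faces and facets reduce to the standard dictionary of polar duality. First I would record that $\Asoc(W,c)$ is genuinely a simplicial complex: if $\mathsf{Q}\setminus I$ contains a reduced word for $w_o$ and $I' \subseteq I$, then $\mathsf{Q}\setminus I' \supseteq \mathsf{Q}\setminus I$ also contains such a word, so $\Asoc(W,c)$ is closed under passing to subwords. Next I would identify its facets. Since $|\mathsf{Q}| = N+n$ and $\ell(w_o) = N$, a subword $I$ is maximal in $\Asoc(W,c)$ exactly when $\mathsf{Q}\setminus I$ is itself a reduced word for $w_o$, which forces $|I| = n$. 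This already matches the stated bijection between vertices of the associahedron and subwords of $\Asoc(W,c)$ with $n$ letters.

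The substantive topological input is the theorem of Knutson and Miller: every subword complex is homeomorphic to a ball or to a sphere, and the subword complex on a word with respect to $w_o$ is a sphere precisely when the Demazure product of that word equals $w_o$. Here $\mathsf{Q} = \mathsf{c}\,\mathsf{w_o(c)}$ contains the reduced word $\mathsf{w_o(c)}$ for the longest element $w_o$ as a subword, so its Demazure product is $w_o$, and hence $\Asoc(W,c)$ is a simplicial sphere of dimension $n-1$. I would then invoke the brick polytope of Pilaud and Stump, which realizes this sphere as the boundary complex of an $n$-dimensional simplicial polytope whose polar dual is a realization of the $W$-associahedron; this is the step that actually certifies polytopality and pins down the identification with the associahedron itself, rather than with some abstractly isomorphic sphere.

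With polytopality in hand, the theorem becomes pure polar duality. A face of the simple polytope of dimension $k$ corresponds to a face of the dual sphere of dimension $n-1-k$, that is, to a subword $I \in \Asoc(W,c)$ with $|I| = n-k$; concretely, the face attached to $I$ consists of exactly those vertices---the $n$-letter subwords $F$---with $I \subseteq F$, which is the precise sense in which a face ``fixes the chosen collection of letters'' $I$. Specializing to $k = n-1$ recovers the facets: they correspond to the $0$-dimensional faces of the dual, namely the single letters of $\mathsf{Q}$, and I would check that each of the $N+n$ positions really is a vertex of the subword complex (equivalently, that each lies in some facet), recovering the count $N+n$ of facets of the $W$-associahedron. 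As a consistency check, the case $k=1$ of this correspondence, where $F$ and $F'$ share an $(n-1)$-letter subword and differ in a single position, reproduces exactly the flip description of the $1$-skeleton quoted just above.

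The main obstacle is the brick-polytope step. Passing from the abstract combinatorial sphere $\Asoc(W,c)$ to the assertion that it is polytopal \emph{and} that its polar dual is the $W$-associahedron is the genuinely hard geometric content, and it is exactly what the cited work of Pilaud and Stump supplies. By contrast, the closure and dimension bookkeeping of the first paragraph, the Demazure-product computation feeding Knutson--Miller, and the face correspondence of the third paragraph are all either elementary or formal consequences of polar duality.
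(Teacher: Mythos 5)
Your proposal is correct and matches the paper's treatment: the paper states this theorem without proof, simply citing \cite{ceballos2014subword} and \cite{pilaud2011brick}, and your outline (closure and facet bookkeeping, Knutson--Miller sphericity via the Demazure product, the brick polytope of Pilaud--Stump for polytopality, then polar duality) is precisely the argument supplied by those references. You correctly identify the brick-polytope realization as the one genuinely hard step and delegate it to the same source the paper does, so there is nothing to flag beyond the minor verification you already note, that every letter of $\mathsf{Q}$ is a vertex of the complex, consistent with the $W$-associahedron having $N+n$ facets.
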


It is not hard to see that the face structure of $\Asoc(W,c)$ does not depend on $c$ (see Section~\ref{sec:orient}), and we may therefore associate $\Asoc(W,c)$ with the \defn{$W$-associahedron}.

\begin{figure}[htbp]
	\begin{center}
		\includegraphics[width=3in]{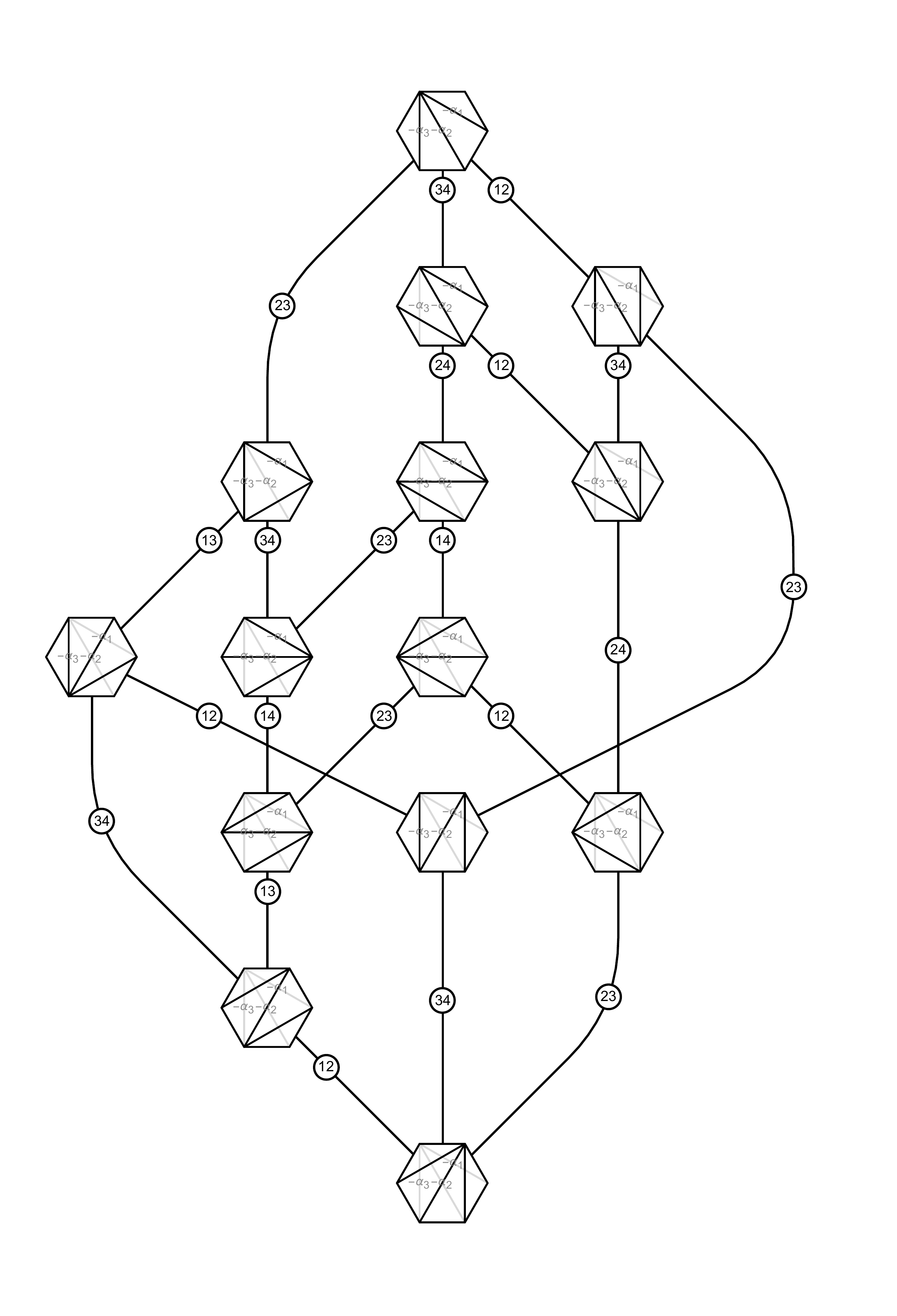}
	\end{center}
	\caption{The flip graph for $\Asoc(A_3,s_1s_2s_3)$, where we have replaced subwords with their corresponding triangulations (a description of this bijection is given in~\cite{ceballos2014subword}; see also Section~\ref{sec:combinatorial_models}).  The letters of $\mathsf{Q}$ correspond to diagonals of the polygon, and moving a letter to flip from one vertex to another may be visualized as flipping the corresponding diagonal.  When correctly oriented as a poset (as it is above), it is isomorphic to the Tamari lattice.  We do not explain the edge labels here, but refer the reader to Figure~\ref{fig:camba3}.}
		\label{fig:flipgraph}
\end{figure}

\subsection{Sortable Elements and Parabolic Subgroups}

In order to describe the normalization map, we require the notion of a parabolic subgroup on $(W,c)$-Catalan objects.  This is most easily introduced using N.~Reading's $c$-sortable elements $\Sort(W,c) \subseteq W$, which---as a subset of elements of $W$---inherit the parabolic structure of $W$.

\begin{definition}[{\cite{reading2007sortable}}]
\label{def:sort_sortable}
   An element $w \in W$ is \defn{$c$-sortable} if its $c$-sorting word defines a decreasing sequence of subsets of positions in $\mathbf{c}$.  We denote the set of $c$-sortable elements by $\Sort(W,c)$.
\end{definition}

Let $c'$ be the \defn{restriction} of $c$ to $W_J$, obtained by removing all simple reflections not in $J$ from a reduced word $\mathsf{c}$.  It is immediate from this definition that if $w$ is $c$-sortable, then $w_J$ is $c'$-sortable, and so we obtain the desired parabolic structure.  

We now relate $\Sort(W,c)$ and $\Asoc(W,c)$ to characterize the parabolic structure on $\Asoc(W,c)$.  There is a natural bijection between the vertices of $\Asoc(W,c)$ and $\Sort(W,c)$, which may be described using equivalent recursive structures on each set of objects, as in~\cite[Section 6.4.2]{pilaud2011brick} or~\cite{reading2007clusters}.  In particular, an element $w \in \Sort(W,c)$ is in $W_J$ iff its corresponding subword (with $n$ letters) does not use the simple reflections in $J$ from the inital copy of $\mathsf{c}$ in $\mathsf{Q}=\mathsf{c}\mathsf{w_o(c)}$.  We conclude the following Proposition.

\begin{proposition}[\cite{ceballos2014subword,pilaud2011brick}]
\label{prop:parabolic_on_asoc}
	Let $s$ be initial in $c$ and write $\mathsf{Q}=\mathsf{c w_o(c)}$ so that its first letter $\mathsf{Q}_1$ is $s$.  The facet $\{\mathsf{Q}_1\}$ of $\Asoc(W,c)$ contains the vertices recursively in bijection with those $c$-sortable elements in $W_{\langle s \rangle}$.
\end{proposition}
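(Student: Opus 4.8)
The plan is to deduce the statement from the recursive bijection between $\Sort(W,c)$ and the vertices of $\Asoc(W,c)$ \cite{pilaud2011brick,reading2007clusters}, combined with Reading's recursion for sortable elements at the initial letter $s$. For a $c$-sortable $w$ there are two cases: $sw>w$ or $sw<w$. By the decreasing-subsets condition of Definition~\ref{def:sort_sortable}, if $s$ is absent from the first block of the $c$-sorting word of $w$ then it is absent from every block, so $w$ avoids $s$ altogether; hence $\{w \in \Sort(W,c) : sw > w\}$ is exactly $\Sort(W,c)\cap W_{\langle s\rangle}$, which (as noted after Definition~\ref{def:sort_sortable}) coincides with the $c'$-sortable elements $\Sort(W_{\langle s\rangle},c')$ of the parabolic. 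These are the elements the Proposition asks us to match with the facet $\{\mathsf{Q}_1\}$.

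On the subword side I would run the parallel recursion at $\mathsf{Q}_1 = s$. A vertex, i.e.\ an $n$-letter subword $P$ whose complement is a reduced word for $w_o$, lies on the facet $\{\mathsf{Q}_1\}$ exactly when $1 \in P$, that is, when the initial letter is kept in $P$ rather than spent as a letter of the complementary reduced word for $w_o$. The crucial step is to show that the bijection carries $sw>w$ (equivalently $w \in W_{\langle s\rangle}$) to $1\in P$: when $w$ avoids $s$ its $c$-sorting word never touches position $1$, so the recursion leaves $\mathsf{Q}_1$ in $P$ (where it records the negative simple root $-\alpha_s$), while if $sw<w$ the letter $\mathsf{Q}_1$ is consumed as the first letter of the complement and $1\notin P$. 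This is precisely the parabolic characterization recalled just before the Proposition, specialized to the initial reflection $s$.

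It then remains to identify the restriction of the bijection to this facet. Deleting position $1$ --- taking the link of the vertex $\mathsf{Q}_1$ in the subword complex --- sends $\{P : 1\in P\}$ to the facets of the subword complex on $(\mathsf{Q}_2,\dots,\mathsf{Q}_{N+n})$, and by the recursive descriptions of \cite{pilaud2011brick,reading2007clusters} this is the $W_{\langle s\rangle}$-associahedron $\Asoc(W_{\langle s\rangle},c')$, with the induced vertex map agreeing with the sortable-element bijection for $W_{\langle s\rangle}$. Composing, the facet $\{\mathsf{Q}_1\}$ is in bijection with $\Sort(W_{\langle s\rangle},c')$, as claimed.

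The main obstacle is the compatibility asserted in the middle step: one must check that Reading's descent-versus-parabolic splitting of $\Sort(W,c)$ and the link-versus-deletion splitting of $\Asoc(W,c)$ at $\mathsf{Q}_1$ are intertwined by the bijection, so that ``$w$ avoids $s$'' matches ``$P$ retains $\mathsf{Q}_1$''. Rather than rebuild the bijection, I would quote the explicit initial-letter recursions of \cite[Section 6.4.2]{pilaud2011brick} and \cite{reading2007clusters} and verify only the two base cases and the single recursive step; the geometric shadow of this compatibility is the standard fact that the link of $-\alpha_s$ in the cluster complex is the cluster complex of $W_{\langle s\rangle}$.
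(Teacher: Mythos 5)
Your proposal is correct and takes essentially the same route as the paper, which likewise deduces the Proposition from the recursive bijection between $\Sort(W,c)$ and the vertices of $\Asoc(W,c)$ described in \cite[Section 6.4.2]{pilaud2011brick} and \cite{reading2007clusters}, together with the observation that a sortable element lies in $W_{\langle s \rangle}$ exactly when its corresponding subword retains the initial letter $\mathsf{Q}_1$. The extra details you supply---the decreasing-subsets argument showing $sw>w \Leftrightarrow w \in W_{\langle s \rangle}$ for $c$-sortable $w$, and the identification of the link of $\mathsf{Q}_1$ with the subword complex $\Asoc(W_{\langle s \rangle}, sc)$---simply make explicit the recursive compatibility that the paper, treating the statement as known, delegates to the cited references.
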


In order to check that projection to the parabolic subgroup is a normalization map, we require reasonably explicit descriptions of the edges of the $1$-skeleton of the $W$-associahedron---and hence the edges of the flip graph of $\Asoc(W,c)$---as $c$-sortable elements, and slightly more explicit descriptions of the edges incident to the parabolic subgroup $W_{\langle s \rangle}$.

Recall that $W$ has the projection $\pi_\downarrow^c: W \to \Sort(W,c)$, where \[\pi_\downarrow^c(w)=\begin{cases} s \pi_\downarrow^{scs}(sw) & \text{if } w \geq s \\ \pi_\downarrow^{sc}(w_{\langle s \rangle}) & \text{if } w \not \geq s,\end{cases}\] which by ~\cite[Corollary 6.2]{reading2011sortable} may be characterized as the unique maximal $c$-sortable element below $w$ in the weak order.  This projection respects the parabolic structure of $W$, in that it satisfies \[\pi_\downarrow^c(w)_J = \pi_\downarrow^{c'}(w_J),\] where $c'$ is the restriction of $c$ to $W_J$~\cite[Proposition 6.13]{reading2011sortable}.

\begin{proposition}
\label{prop:edges_on_sort}
	\begin{enumerate}
		\item The underlying graph of the restriction of $\Weak(W)$ to $\Sort(W,c)$ is the $1$-skeleton of the $W$-associahedron.  Every edge may be expressed uniquely as $w \to \pi_\downarrow^c(ws)$ for some $w \in W$ and some $s \in S$ such that $ws <w$.
		\item Let $s$ be initial in $c$ and let $w \in \Sort(W_{\langle s \rangle},sc)$.  Then $v=s \vee w$ is the unique edge from $w$ to an element not in $W_{\langle s \rangle}$.
	\end{enumerate}
\end{proposition}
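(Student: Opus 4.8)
My plan is to treat the two assertions separately, since they rest on different parts of Reading's theory. For~(1), the identification of the underlying graph of $\Weak(W)$ restricted to $\Sort(W,c)$ with the $1$-skeleton is a matter of assembling cited facts: the recursive bijection between $\Sort(W,c)$ and the $n$-letter subwords in $\Asoc(W,c)$ (recalled before Proposition~\ref{prop:parabolic_on_asoc}, following~\cite{pilaud2011brick,reading2007clusters}) carries a flip of subwords to an edge, i.e.\ a cover relation, of the weak order restricted to sortable elements, and the flip graph is the $1$-skeleton by~\cite{pilaud2011brick}; composing these gives the graph isomorphism. For the parametrization of edges I would orient each edge downward in $\Weak(W)$, so that its larger endpoint is a well-defined $c$-sortable element $w$, and then invoke Reading's description of the lower covers of a sortable element: for each right descent $s$ of $w$ (i.e.\ $ws<w$) the element $\pid(ws)\leq ws<w$ is a lower cover of $w$ in the Cambrian lattice, and $s\mapsto\pid(ws)$ is injective on the right descents of $w$~\cite{reading2007sortable,reading2011sortable}. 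Injectivity is exactly what makes the expression $w\to\pid(ws)$ unique, and it is the one step that genuinely uses the structure of sortable elements rather than a generic lattice-quotient argument, so I expect it to be the crux of~(1).

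For~(2) I would first isolate the combinatorial input that organizes everything: with $s$ initial in $c$, every $c$-sortable element $x$ satisfies either $x\geq s$ or $x\in W_{\langle s\rangle}$, and these are mutually exclusive. This follows directly from the definition of $c$-sortability: the letter $s$ sits in the first position of $\mathsf{c}^\infty$, so it occurs in the $c$-sorting word of $x$ iff it occurs in the first copy of $\mathsf{c}$ (the supports of successive copies being nested decreasingly), which happens iff $s$ is a left descent of $x$, i.e.\ $x\geq s$; if $s$ never occurs then $x\in W_{\langle s\rangle}$. Equivalently, a sortable element lies on the facet $W_{\langle s\rangle}$ (Proposition~\ref{prop:parabolic_on_asoc}) precisely when $\alpha_s\notin\inv(x)$.

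With the dichotomy in hand I would argue as follows. Since $w\in\Sort(W_{\langle s\rangle},sc)\subseteq W_{\langle s\rangle}$ we have $\alpha_s\notin\inv(w)$, so any downward edge $w\gtrdot v'$ has $\inv(v')\subsetneq\inv(w)$ and hence $\alpha_s\notin\inv(v')$; by the dichotomy $v'\in W_{\langle s\rangle}$, so no downward edge leaves the facet, and a leaving edge must point upward. Here I would use that $v:=s\vee w$ (the join in $\Weak(W)$) is again $c$-sortable---the key lemma, which I would import from Reading's analysis of sortable elements~\cite{reading2007sortable,reading2011sortable}---together with $v\geq s$, so that $v\notin W_{\langle s\rangle}$ and $v>w$. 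Since the $W$-associahedron is a simple polytope~\cite{hohlweg2011permutahedra}, the vertex $w$ lies on exactly one edge not contained in the facet $W_{\langle s\rangle}$; writing that edge as $w\lessdot v'$ with $v'\notin W_{\langle s\rangle}$, the dichotomy gives $v'\geq s$, so $v'$ is a sortable upper bound of both $s$ and $w$ and therefore $v'\geq v=s\vee w$. As $v$ is sortable with $w<v\leq v'$ and $w\lessdot v'$ is a cover, I conclude $v'=v$, so the unique edge leaving $W_{\langle s\rangle}$ is $w\to s\vee w$.

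The main obstacle is precisely the key lemma that the weak-order join $s\vee w$ is again $c$-sortable when $s$ is initial; this is where I would lean on the established theory rather than attempt an ad hoc argument (note that $s(s\vee w)$ is in general not $w$, so the recursion for sortability does not collapse trivially). If one prefers to stay self-contained, the cleanest route is to read $s\vee w$ as the join computed in the Cambrian lattice, which is automatically sortable, and to establish $w\lessdot s\vee w$ directly from the inversion-set identity $\inv(s\vee w)\cap\Phi^+(W_{\langle s\rangle})=\inv(w)$---so that no element of $W_{\langle s\rangle}$, and hence (by the dichotomy) no sortable element, lies strictly between $w$ and $s\vee w$---which also removes the appeal to simplicity. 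Granting the sortability input, the dichotomy, the downward-edge comparison, and the join-domination step are all short and formal.
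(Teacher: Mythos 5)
Your argument is correct, and it shares the paper's two load-bearing inputs---that $v = s \vee w$ is $c$-sortable because sortable elements are closed under join, and the degree count (the $1$-skeleton is $n$-regular while the facet carries the $1$-skeleton of a rank-$(n-1)$ associahedron) giving exactly one edge out of $W_{\langle s \rangle}$---but your step identifying that edge with $w \to s \vee w$ takes a genuinely different route. The paper verifies directly that $w \to v$ is an edge in the canonical form of part~(1): since $s$ is a cover reflection of $v$ by~\cite[Lemma 2.8]{reading2007sortable}, there is $t \in S$ with $sv = vt < v$, and the parabolic recursion for $\pi_\downarrow^c$ together with the inversion-set computation $\inv(vt) \cap \Phi^+(W_{\langle s \rangle}) = \inv(v_{\langle s \rangle}) = \inv(w)$ yields $\pi_\downarrow^c(vt) = w$. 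You instead exploit the dichotomy for $s$ initial in $c$ (every $c$-sortable $x$ satisfies $x \geq s$ or $x \in W_{\langle s \rangle}$, straight from the sortability recursion): downward edges from $w$ cannot leave the facet since inversion sets only shrink, the endpoint $v'$ of any leaving edge satisfies $v' \geq s$ and $v' \geq w$, hence $v' \geq s \vee w = v$, and the cover relation $w \lessdot v'$ in the order induced on $\Sort(W,c)$ squeezes $v' = v$. This trades the cover-reflection lemma and the $\pi_\downarrow^c$ computation for a purely order-theoretic argument; what the paper's route buys is an explicit realization of the leaving edge as a flip $v \to \pi_\downarrow^c(vt)$, i.e.\ it exhibits which descent of $v$ realizes the edge, which your domination argument does not produce. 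Your closing observation is also right, and is a small improvement: establishing $w \lessdot v$ directly from $\inv(v) \cap \Phi^+(W_{\langle s \rangle}) = \inv(w)$ via the dichotomy (no sortable element in $W_{\langle s \rangle}$ can lie strictly between, and any sortable $x$ with $x \geq s$ and $x \leq v$ equals $v$) lets the join-domination step deliver uniqueness as well, so the appeal to $n$-regularity---which both the paper and your main argument invoke---can be dropped entirely. For part~(1) you and the paper operate at the same citation level: the paper quotes~\cite[Corollary 8.1]{reading2006cambrian} wholesale, while you reassemble the same content from the subword-complex bijection of~\cite{pilaud2011brick,reading2007clusters} and the injectivity of $s \mapsto \pi_\downarrow^c(ws)$ on the descents of a sortable element, correctly reading the uniqueness claim as uniqueness of $(w,s)$ with $w$ the upper endpoint.
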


\begin{proof}
	The first statement follows~\cite[Corollary 8.1]{reading2006cambrian}.
	
	For the second statement, we first note that since $v=s \vee w$ is the join of two $c$-sortable elements, it is itself $c$-sortable.  Since a Cambrian lattice in rank $n$ is $n$-regular, there is exactly one edge from $w$ to an element outside of $W_{\langle s \rangle}$.  We check that $w \to v$ is indeed this edge, which would follow if $\pi_\downarrow^c(v t) = w$ for some simple reflection $t$.  Since $s$ is a cover reflection of $v$ by~\cite[Lemma 2.8]{reading2007sortable}, there is a simple reflection $t$ so that $sv=vt$.  Then $s \not \in \inv(vt)$, so that \[\pi_\downarrow^c(vt)=\pi_\downarrow^{sc}((vt)_{\langle s \rangle})=\pi_\downarrow^{sc}(w)=w,\]
	since \[\inv((vt)_{\langle s \rangle}) = \inv(vt)\cap \Phi^+(W_{\langle s \rangle})=\inv(v) \cap \Phi^+(W_{\langle s \rangle})=\inv(v_{\langle s \rangle})=\inv(w).\]
\end{proof}

\subsection{Orienting the Associahedron}
\label{sec:orient}

To orient the $W$-associahedron---as we did with the $W$-permutahedron by multiplication---we require the notion of \defn{Cambrian rotation}.  Write $\overline{s}=s^{w_o} \in S$ and shift a subword in $\Asoc(W,c)$ one position to the left to obtain a subword of $\mathsf{Q'}=s_{\pi_2} \cdots s_{\pi_{n}} \mathsf{w_o(c)} \overline{s}$ (the leftmost letter $s$ is sent to the rightmost letter $\overline{s}$).   If we let $c'=s^{-1} cs$ with corresponding reduced word $\mathsf{c'}$ then, up to commutations, $s_{\pi_2} \cdots s_{\pi_{n}} \mathsf{w_o(c)} \overline{s}=\mathsf{c' w_o(c')},$ so that the resulting subword corresponds to a subword in $\Asoc(W,c')$~\cite{ceballos2014subword}.  This defines a map \[\Camb_s: \Asoc(W,c) \to \Asoc(W,s^{-1}cs),\] and it is easy to see that this map does not change vertex adjacencies or the face structure.  Using the characterization of facets from Proposition~\ref{prop:parabolic_on_asoc}, the following Lemma follows immediately from the definition of Cambrian rotation.

\begin{lemma}
\label{lem:camb_facet_to_para}
	Any facet $f$ of $\Asoc(W,c)$ may be sent by Cambrian rotation to a facet $f'$ of $\Asoc(W,c')$ for some $c'$, such that the vertices of $f'$ correspond to $sc'$-sortable elements in $W_{\langle s \rangle},$ where $s$ is initial in $c'$.
\end{lemma}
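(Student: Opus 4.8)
The plan is to reduce an arbitrary facet to the one indexed by the very first letter of the subword, where Proposition~\ref{prop:parabolic_on_asoc} applies directly, by iterating Cambrian rotation. Recall that the facets of $\Asoc(W,c)$ are parametrized by the single letters $\mathsf{Q}_1,\ldots,\mathsf{Q}_{N+n}$ of $\mathsf{Q}=\mathsf{c w_o(c)}$, so fix a facet $f=\{\mathsf{Q}_j\}$. If $j=1$ there is nothing to do: take $c'=c$ and let $s$ be initial in $c$, and $f$ is already the required facet. Otherwise I would first record how $\Camb_s$ acts on this parametrization.

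Since $\Camb_s$ shifts every subword one position to the left, carrying the leftmost letter $s=\mathsf{Q}_1$ around to the rightmost position $\overline{s}$ of $\mathsf{Q'}=\mathsf{c' w_o(c')}$ with $c'=s^{-1}cs$, the facet $\{\mathsf{Q}_j\}$ of $\Asoc(W,c)$ is carried to the facet $\{\mathsf{Q'}_{j-1}\}$ of $\Asoc(W,c')$ whenever $j\geq 2$. Because $\Camb_s$ preserves vertex adjacencies and the face structure, this is a genuine identification of facets together with their vertex sets. I would then iterate: writing $c^{(0)}=c$, letting $s^{(i)}$ be the initial letter of $c^{(i)}$, and setting $c^{(i+1)}=(s^{(i)})^{-1}c^{(i)}s^{(i)}$, the rotations $\Camb_{s^{(0)}},\ldots,\Camb_{s^{(j-2)}}$ each lower the index of the distinguished letter by one, so after $j-1$ rotations $f$ is carried to the facet $f'=\{\mathsf{Q}^{(j-1)}_1\}$ indexed by the first letter of $\mathsf{Q}^{(j-1)}=\mathsf{c}^{(j-1)}\mathsf{w_o(c^{(j-1)})}$. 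Setting $c':=c^{(j-1)}$ and letting $s$ be its initial letter gives $\mathsf{Q}^{(j-1)}_1=s$.

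Finally, Proposition~\ref{prop:parabolic_on_asoc} identifies the vertices of $f'$ with the $c'$-sortable elements lying in $W_{\langle s\rangle}$. To match the statement of the lemma, I would observe that since $s$ is initial in $c'$, the restriction of $c'$ to $W_{\langle s\rangle}$ obtained by deleting $s$ from a reduced word for $c'$ is exactly $sc'$; by the discussion of restriction following Definition~\ref{def:sort_sortable}, an element of $W_{\langle s\rangle}$ is $c'$-sortable if and only if it is $sc'$-sortable. Hence the vertices of $f'$ correspond to the $sc'$-sortable elements of $W_{\langle s\rangle}$, which is the desired conclusion.

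The only content beyond routine bookkeeping is the first step, namely verifying that $\Camb_s$ shifts the facet parametrization by exactly one position and correctly tracks the wrap-around of the initial letter to $\overline{s}$; I expect this to follow immediately from the explicit description of Cambrian rotation recalled in Section~\ref{sec:orient}, so the bulk of the work is organizing the iteration and the $c'$-versus-$sc'$ identification cleanly.
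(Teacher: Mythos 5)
Your proposal is correct and matches the paper's argument: the paper simply asserts that the lemma ``follows immediately from the definition of Cambrian rotation'' together with the characterization of facets in Proposition~\ref{prop:parabolic_on_asoc}, and your write-up is exactly that argument made explicit --- iterating $\Camb_s$ to shift the distinguished letter to position one, then applying Proposition~\ref{prop:parabolic_on_asoc}. Your closing observation that $c'$-sortability and $sc'$-sortability agree on $W_{\langle s \rangle}$ (via restriction of $c'$) is also the identification the paper uses implicitly when passing from the proposition's phrasing to the lemma's.
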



\subsection{The Main Theorem}
\label{sec:main_theorem}

We are now ready to prove the main theorem of this note.

\begin{theorem}
\label{thm:assoc_in_your_face}
The $W$-associahedron is in-your-face.
\end{theorem}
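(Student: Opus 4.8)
The plan is to mirror the permutahedron argument exactly, using Lemma~\ref{lem:norm_implies_in_your_face}: it suffices to exhibit a normalization map for each facet of $\Asoc(W,c)$. By Lemma~\ref{lem:camb_facet_to_para}, Cambrian rotation lets me reduce to a single canonical type of facet. So **first** I would fix a facet $f$; applying Cambrian rotation $\Camb_s$ (which by construction preserves vertex adjacencies and the face structure), I may assume $f = \{\mathsf{Q}_1\}$ is the facet whose vertices correspond, via Proposition~\ref{prop:parabolic_on_asoc}, to the $c$-sortable elements lying in the maximal parabolic $W_{\langle s \rangle}$, where $s$ is initial in $c$. This is the exact analogue of orienting the permutahedron so that $f = W_{\langle s \rangle}$.

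**Next**, I would define the candidate normalization map geometrically/combinatorially as the projection to the parabolic subgroup on sortable elements: send $w \in \Sort(W,c)$ to $\pi_\downarrow^c(w)_{\langle s \rangle} = \pi_\downarrow^{sc}(w_{\langle s \rangle})$, using the compatibility $\pi_\downarrow^c(w)_J = \pi_\downarrow^{c'}(w_J)$ from~\cite[Proposition 6.13]{reading2011sortable}. Since $w$ is already $c$-sortable this is just $w \mapsto \pi_\downarrow^{sc}(w_{\langle s \rangle})$, which lands in $\Sort(W_{\langle s \rangle},sc)$, i.e.\ on the facet $f$. I would then verify the three defining properties of Definition~\ref{def:normalization_map}. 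Property~(1) is immediate: if $w \in W_{\langle s \rangle}$ is $sc$-sortable then $w_{\langle s \rangle}=w$ and the projection fixes it. For Property~(2), I would use the edge description of Proposition~\ref{prop:edges_on_sort}(1): an edge is $w \to \pi_\downarrow^c(wt)$ for a simple reflection $t$ with $wt<w$, and I must show the images are adjacent or equal. This is where I would lean on the compatibility of $\pi_\downarrow^c$ with parabolic projection: applying $(\,\cdot\,)_{\langle s \rangle}$ to an edge either contracts it (when $t$ acts ``through'' $s$) or sends it to a genuine edge in the sub-associahedron $\Asoc(W_{\langle s \rangle}, sc)$, paralleling the exchange-lemma dichotomy in the permutahedron proof.

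**For Property~(3)**, which is the crux, I would use Proposition~\ref{prop:edges_on_sort}(2) directly. If $w \in \Sort(W_{\langle s \rangle},sc)$ lies on $f$ and $w \to v$ leaves $f$, then that proposition identifies $v = s \vee w$ as \emph{the unique} such edge, and its proof already computes $\inv(v_{\langle s \rangle}) = \inv(w)$, so that $v_{\langle s \rangle}=w$ and hence $\pi_\downarrow^{sc}(v_{\langle s \rangle}) = w$. Thus the projection of $v$ returns $w$, which is precisely Property~(3). **The main obstacle** I anticipate is Property~(2): unlike the permutahedron case, where adjacency is governed by the transparent exchange lemma on $W$, here adjacency on $\Sort(W,c)$ is the flip/Cambrian cover structure, and I must confirm that parabolic projection of a Cambrian cover is again a Cambrian cover (or an equality) inside the \emph{smaller} Cambrian lattice on $W_{\langle s \rangle}$ with Coxeter element $sc$. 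I expect this to reduce cleanly to the stated compatibility $\pi_\downarrow^c(w)_J = \pi_\downarrow^{c'}(w_J)$ together with the cover characterization of the edge set, but it is the step requiring the most care to make airtight. Once all three properties hold, Lemma~\ref{lem:norm_implies_in_your_face} delivers the theorem.
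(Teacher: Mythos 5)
Your proposal is correct and follows essentially the same route as the paper: reduce to the facet $f=\Sort(W_{\langle s\rangle},sc)$ via Cambrian rotation (Lemma~\ref{lem:camb_facet_to_para}), take parabolic projection as the normalization map---your $w\mapsto \pi_\downarrow^{sc}(w_{\langle s\rangle})$ coincides with the paper's $w\mapsto w_{\langle s\rangle}$ since $w_{\langle s\rangle}$ is already $sc$-sortable---and verify the three properties of Definition~\ref{def:normalization_map} using Proposition~\ref{prop:edges_on_sort} and Reading's compatibility $\pi_\downarrow^c(w)_J=\pi_\downarrow^{c'}(w_J)$, exactly as the paper does (including the exchange-lemma dichotomy for Property~(2) that you rightly flag as the delicate step). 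The only cosmetic deviation is in Property~(3), where you read off $v_{\langle s\rangle}=w$ from the inversion-set computation inside the proof of Proposition~\ref{prop:edges_on_sort}(2), whereas the paper re-derives it from the characterization of sortable elements by their cover reflections; both are valid and the content is the same.
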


\begin{proof}
	Suppose that we have two vertices $w$ and $w'$ that lie in a common facet $f$ of the $W$-associahedron.  Thinking of $w$ and $w'$ as vertices of the subword complex, by Lemma~\ref{lem:camb_facet_to_para} we can orient the associahedron so that $f$ coincides with the set of sortable elements $\Sort(W_{\langle s \rangle},sc)$ in a maximal standard parabolic.  Since Cambrian rotation preserves all edges of the $W$-associahedron, we are reduced to considering only facets of the form $f=\Sort(W_{\langle s \rangle},sc)$.
	
	The normalization map again naturally suggests itself: we define \[\projCamb_{W_{\langle s \rangle}}: \Sort(W,c) \to \Sort(W_{\langle s \rangle},sc)\] by \[\projCamb_{W_{\langle s \rangle}}(u) = u_{\langle s \rangle}.\]  By the discussion directly after Definition~\ref{def:sort_sortable}, $\projCamb_{W_{\langle s \rangle}}$ is a map with image to $\Sort(W_{\langle s \rangle},sc)$.  We now check that $\projCamb_{W_{\langle s \rangle}}$ is a normalization map.
	
	\begin{enumerate}
		\item We immediately conclude that $\projCamb_{W_{\langle s \rangle}}(u)=u$ for $u \in W_{\langle s \rangle}$, since $u = u_{\langle s \rangle}$ when $u \in W_{\langle s \rangle}$.
		
		\item We now check that if $u \to u'$, then $\projCamb_{W_{\langle s \rangle}}(u) \to \projCamb_{W_{\langle s \rangle}}(u')$.  By Proposition~\ref{prop:edges_on_sort}, we may write such an edge as $u \to \pi_\downarrow^c(ut)$, where $ut<u$ and $t\in S$.  Now 
		\[\pi_\downarrow^c(u t)_{\langle s \rangle} = \pi_\downarrow^{sc}((u t)_{\langle s \rangle}),\] by Proposition 6.13 in~\cite{reading2011sortable}.  Using the same argument as in Theorem~\ref{thm:perm_in_your_face}, we write $u = u_{\langle s \rangle} u^{\langle s \rangle}$ so that $t$ removes a reflection from either $u_{\langle s \rangle}$ or from $u^{\langle s \rangle}.$  In the former case, since $u_{\langle s \rangle}$ is $c$-sortable, the edge is preserved: $u \to \pi_\downarrow^c(ut)$ is sent to $u_{\langle s \rangle} \to \pi_\downarrow^c(u_{\langle s \rangle} t')$, where $t'=t^{u^{\langle s \rangle}}$.  In the latter case, the edge is contracted.

		\item Finally, we ensure that if $u \to u'$ with $u \in W_{\langle s \rangle}$ and $u' \not \in W_{\langle s \rangle}$, then $\projCamb_{W_{\langle s \rangle}}(u')=u$.
		By Proposition~\ref{prop:edges_on_sort}, we may write such an edge as $u \to u'$, where $u'=s \vee u$.  Now $u' = s \vee u'_{\langle s \rangle}$~\cite[Lemma 2.7]{reading2007sortable}, so that because
		\begin{itemize}
			\item $u'_{\langle s \rangle}$ is $sc$-sortable in $W_{\langle s \rangle}$,
			\item $c$-sortable elements are uniquely characterized by their cover reflections~\cite[Proposition 2.5]{reading2007sortable}, and
			\item $\cov(u')=\cov(s \vee u)=\cov(u) \cup \{s\}$~\cite[Lemma 2.8]{reading2007sortable},
		\end{itemize}
		we conclude that \[u=u'_{\langle s \rangle}=\projCamb_{W_{\langle s \rangle}}(u').\]
	\end{enumerate}
	
	Since $\projCamb_{f}$ is a normalization map, we conclude the Theorem by Lemma~\ref{lem:norm_implies_in_your_face}.
	\end{proof}

An example of this projection is given in Figures~\ref{fig:camba3} and~\ref{fig:camba3proj}.

\begin{figure}[htbp]
	\begin{center}
	\includegraphics[width=\textwidth]{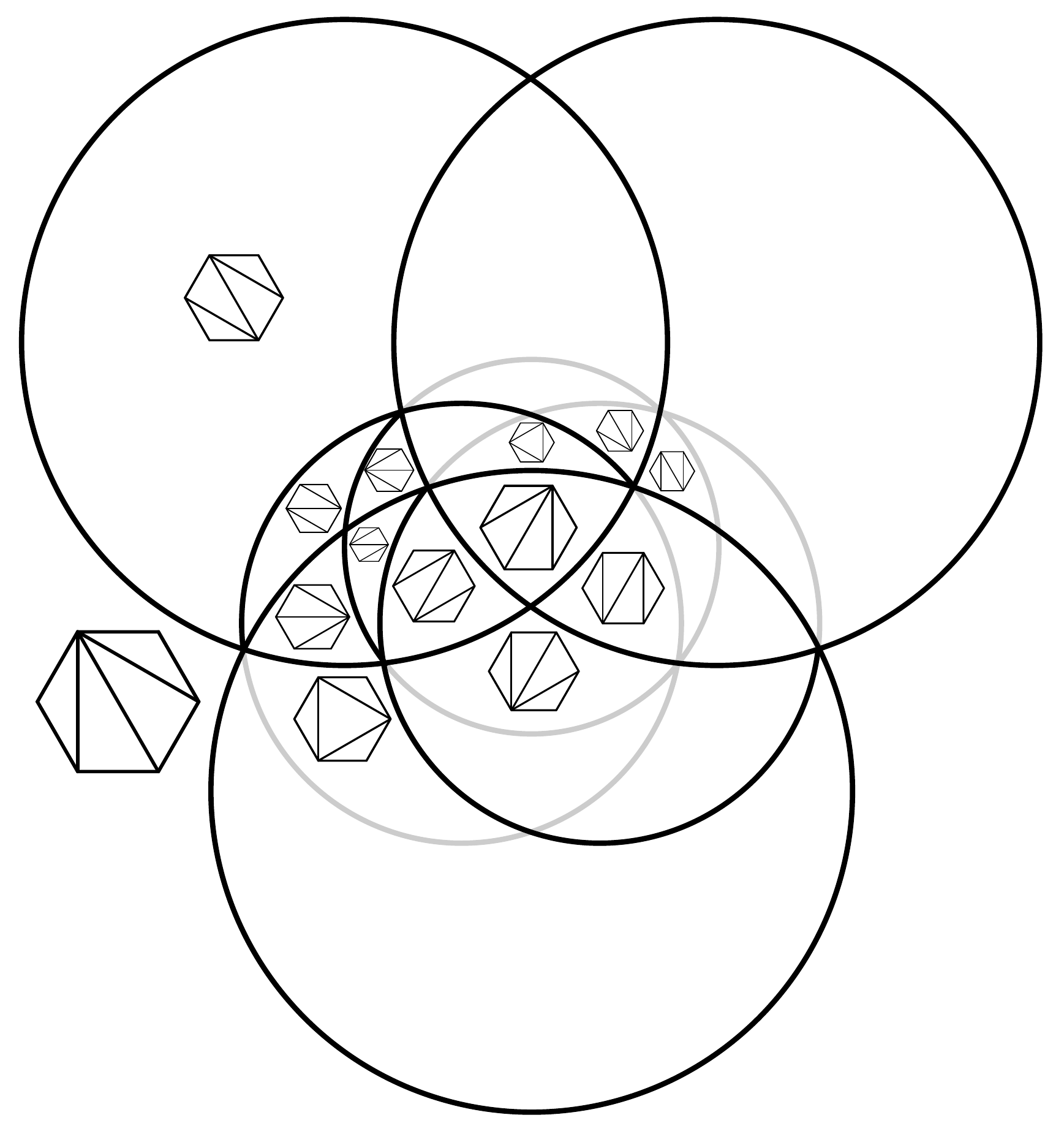}
	\end{center}
	\caption{The stereographic projection of the intersection of the hyperplane arrangement of type $A_3$ with a sphere, where the arcs in black are those edges of the permutahedron preserved under the Cambrian congruence for $c=s_1s_2s_3$.  The connected regions (using only the arcs in black) are indexed by vertices of the subword complex (which we draw as triangulations, as in Figure~\ref{fig:flipgraph}).  The minimal connected regions (when using both gray and black arcs) in the connected regions (using only black arcs) are the $c$-sortable elements.}
	\label{fig:camba3}
\end{figure}
	
\begin{figure}[htbp]
	\begin{center}
	\includegraphics[width=\textwidth]{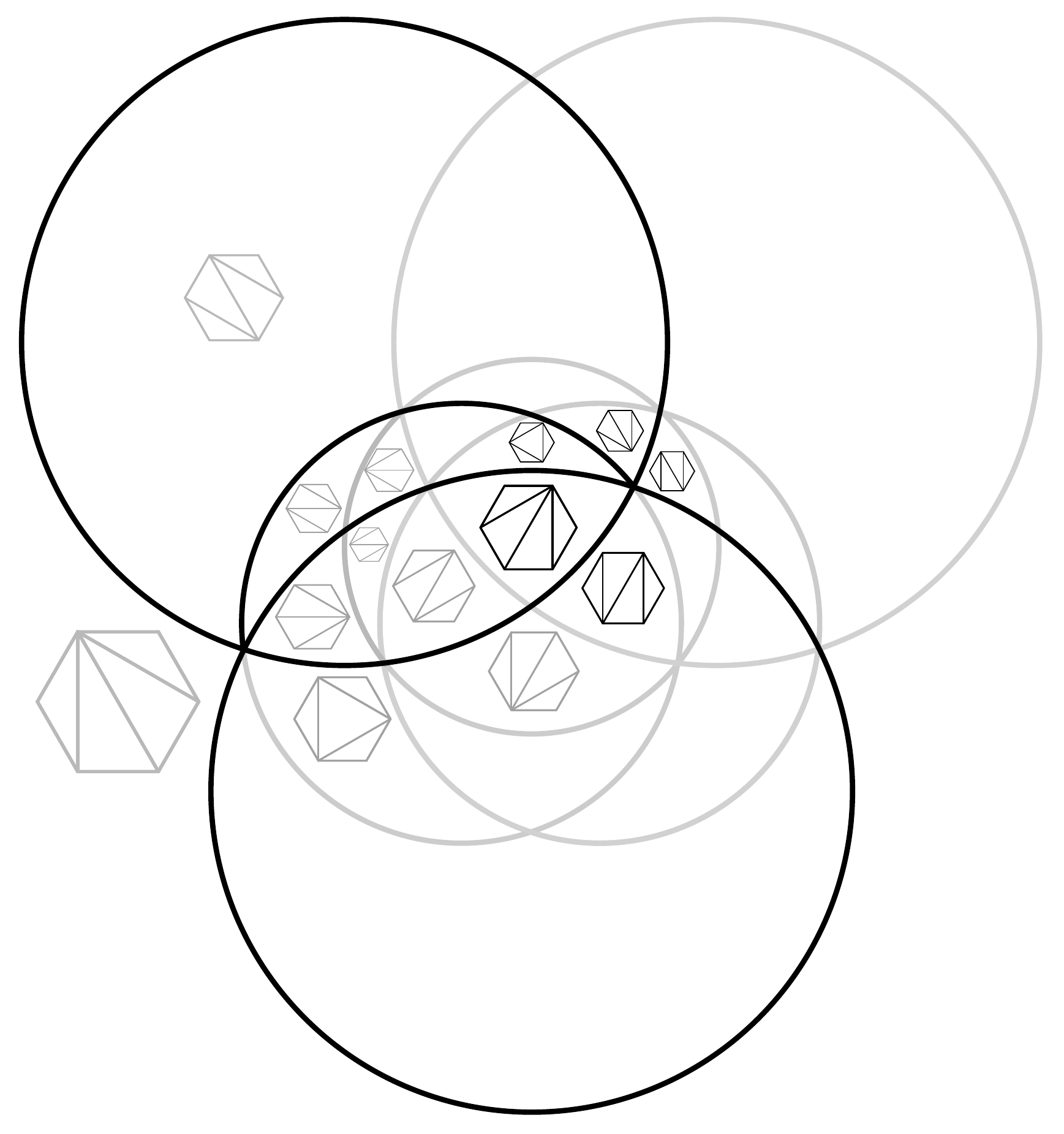}
	\end{center}
	\caption{The restriction of Figure~\ref{fig:camba3} to the maximal parabolic subgroup generated by $s_2$ and $s_3$.  The projection map sends an element $w$ of $\Sort(A_3,c)$ to the unique element $w_{\langle s \rangle}$ of $\Sort((A_3)_{\langle s_1 \rangle},s_2s_3)$ that lies in the same connected region as $w$. It is geometrically clear that this projection either preserves or contracts edges.}
	\label{fig:camba3proj}
\end{figure}

\section{Combinatorial Models}
\label{sec:combinatorial_models}

In this section, we review the combinatorics of type $A$, and prove that our projection recovers a slight modification of the projection defined in~\cite{sleator1988rotation}. 

\subsection{Triangulations}
\label{sec:triangulations}
We first recall the combinatorial model for triangulations in type $A_{n-1}$ for a standard Coxeter element $c$~\cite{reading2006cambrian,ceballos2014subword}.  The element $c$ is an $n$-cycle of the form \[c:=(1, d_1, d_2, \cdots, d_l, n, u_k, u_{k-1}, \cdots, u_{1})\] so that the \defn{lower numbers} $d_i$ satisfy $d_1<d_2<\cdots<d_l$ and the \defn{upper numbers} $u_i$ satisfy $u_1<u_2<\cdots<u_{k}$.  

We use the $n$-cycle above to describe an unconventional method for labeling an $(n+2)$-gon.  We will first describe the vertices of this $(n+2)$-gon.  We fix a circle of radius $(n+1)$ with left-most point at $(0,0)$.  We will refer to the point above the $x$-axis with $x$-coordinate equal to $i$ as the \defn{upper vertex} $i$, and similarly call the corresponding point below the $x$-axis the \defn{lower vertex} $i$.  We mark the vertices $(0,0)$ and $(n+1,0)$, as well as all upper vertices $d_i$ and all lower vertices $u_i$.  
We may freely choose to mark either the upper or the lower vertex of the circle for the $x$-coordinates $1$ and $n$, so that there is exactly one marked vertex on the circle for each $x$-coordinate from $0$ to $n+1$.

We now recall N.~Reading's elegant bijection between $c$-sortable elements and triangulations of this $(n+2)$-gon~\cite{reading2006cambrian}.  Fix a $c$-sortable element $w$, let $\lambda_0$ be the path from $(0,0)$ to $(n+1,0)$ that passes along the lower part of the circle, and let $w=w_1w_2\cdots w_n$ be the one-line notation for $w$.  We now read the one-line notation for $w$ from left to right.  At the $k$-th step, we have a previously-constructed path $P_{k-1}$, and we have reached the $k$th letter $w_k$ of $w$'s one-line notation.  If $w_k$ is a lower number, then $P_{k}$ is equal to $P_{k-1}$ without the lower vertex $w_k$; otherwise, if $w_k$ is an upper number, then $P_k$ is equal to $P_{k-1}$ with the upper vertex $w_k$ added.  Then the union of the paths $P_k$ for $1\leq k \leq n$ is the drawing of a triangulation of the unconventionally-labeled $(n+2)$-gon.  We shall denote this triangulation $\Tri_c(w).$  An example is given in Figure~\ref{fig:c_sort_to_triangulation}.

\begin{figure}[htbp]
\[\begin{array}{ccccc}
\includegraphics[width=.3\textwidth]{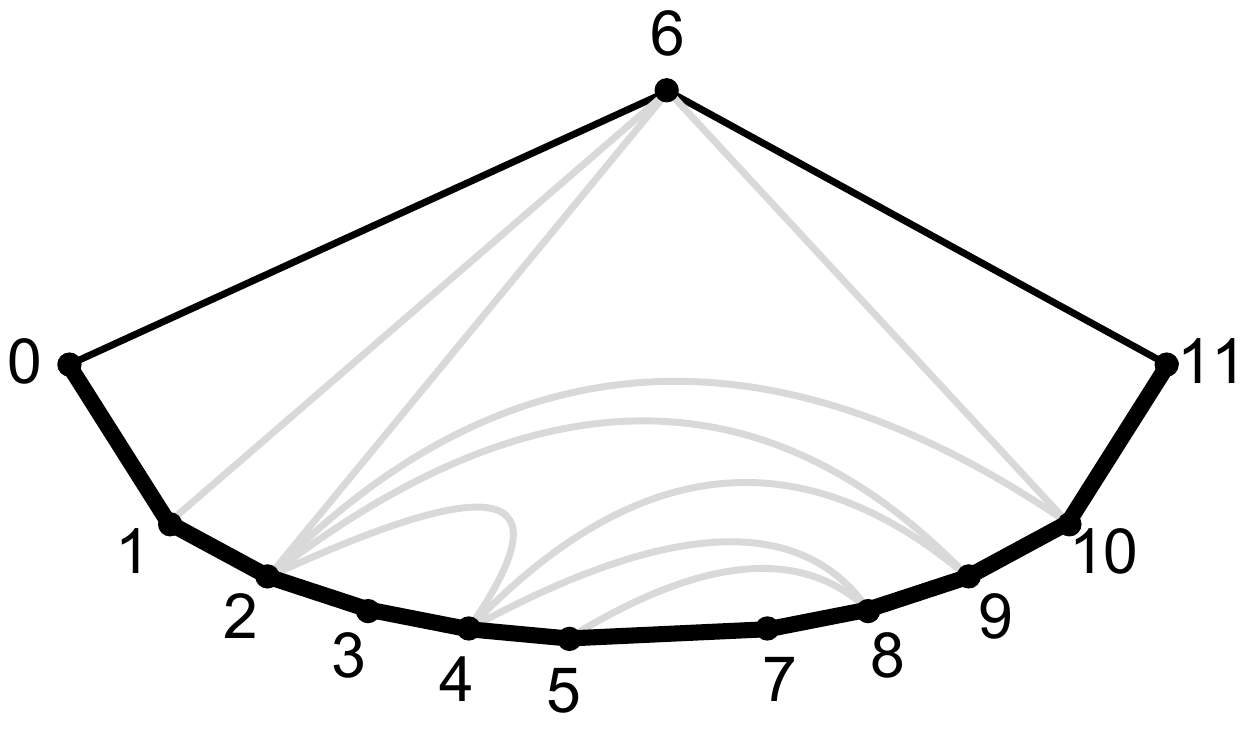} &\includegraphics[width=.3\textwidth]{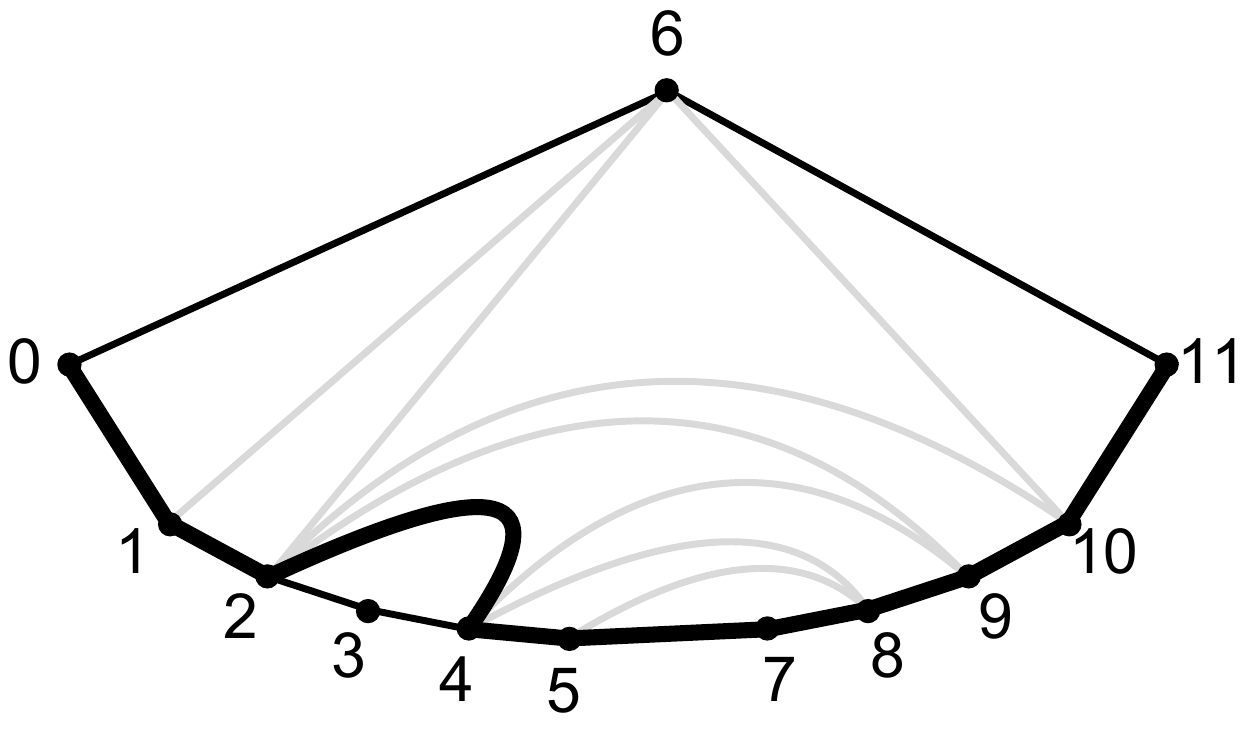}&\includegraphics[width=.3\textwidth]{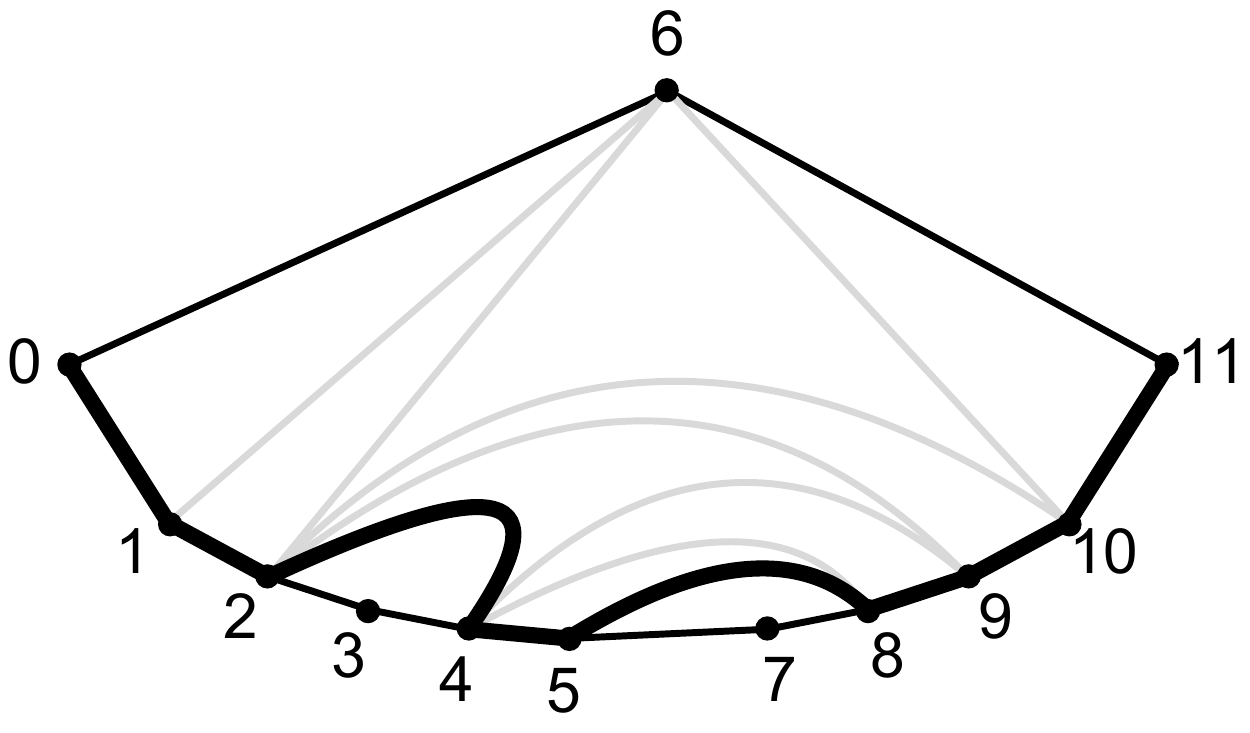}\\
\includegraphics[width=.3\textwidth]{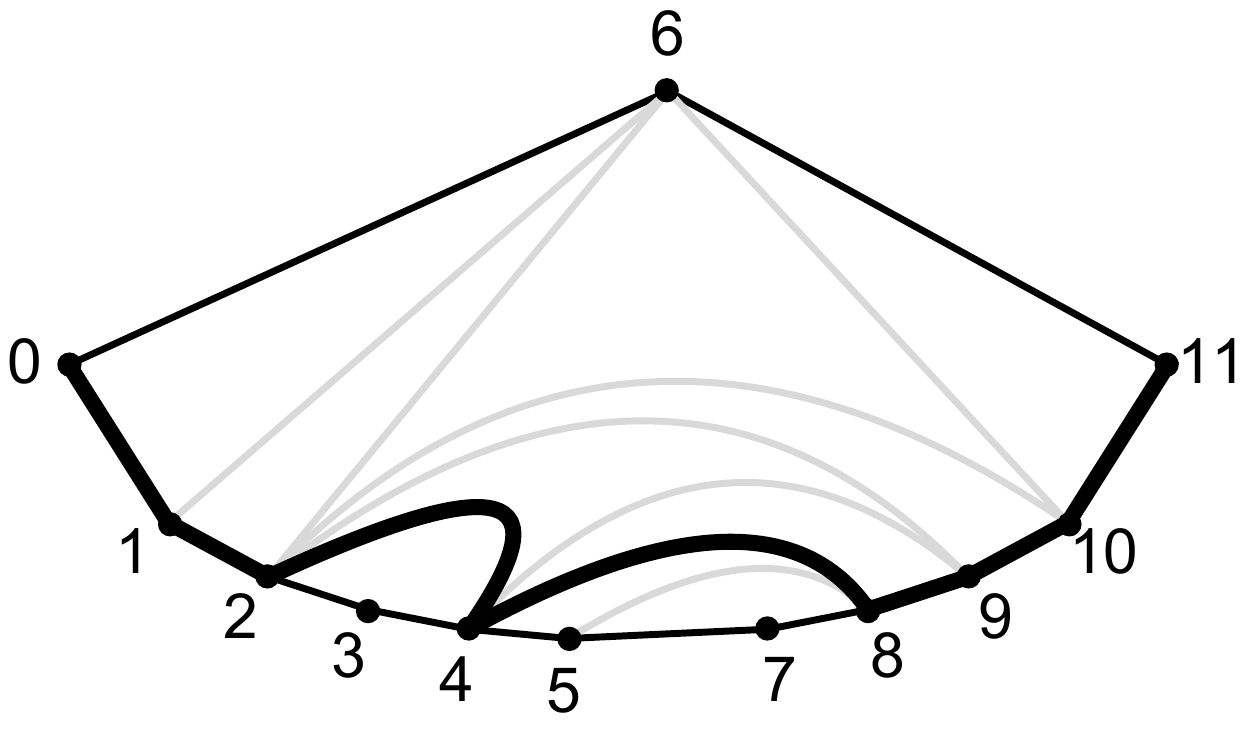}&\includegraphics[width=.3\textwidth]{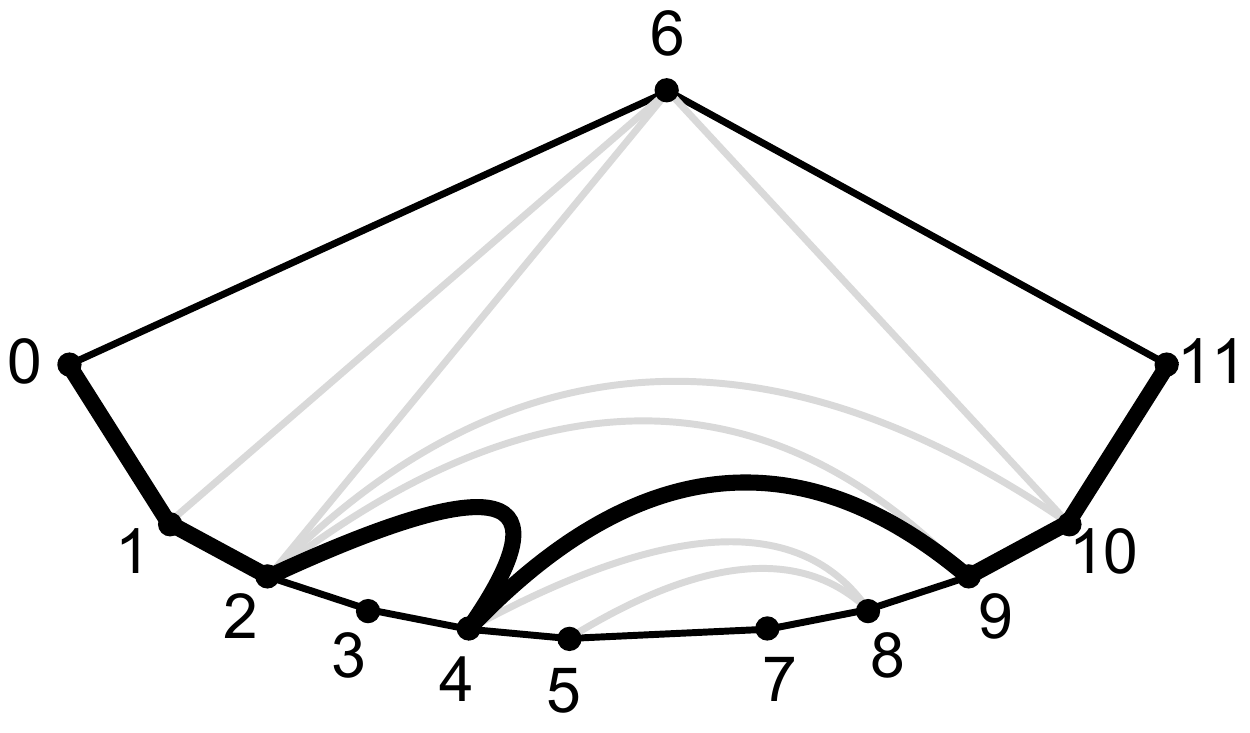} &\includegraphics[width=.3\textwidth]{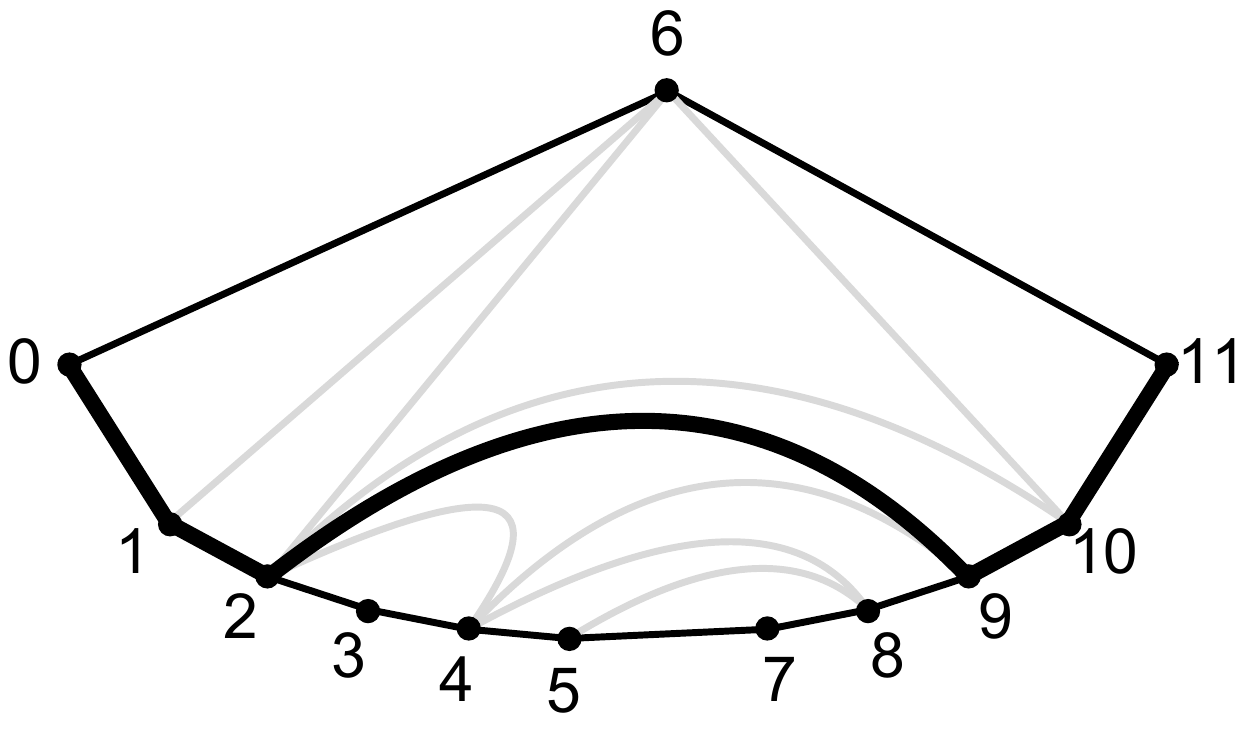}\\
\includegraphics[width=.3\textwidth]{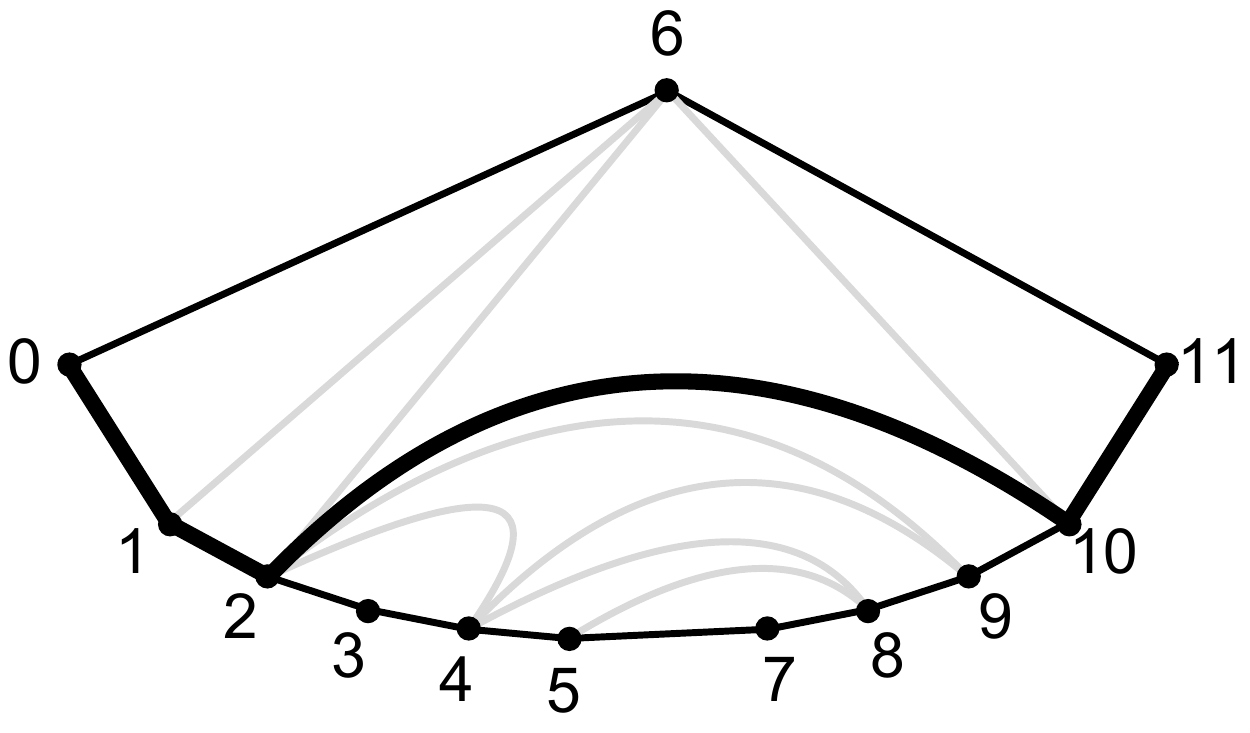} 
&\includegraphics[width=.3\textwidth]{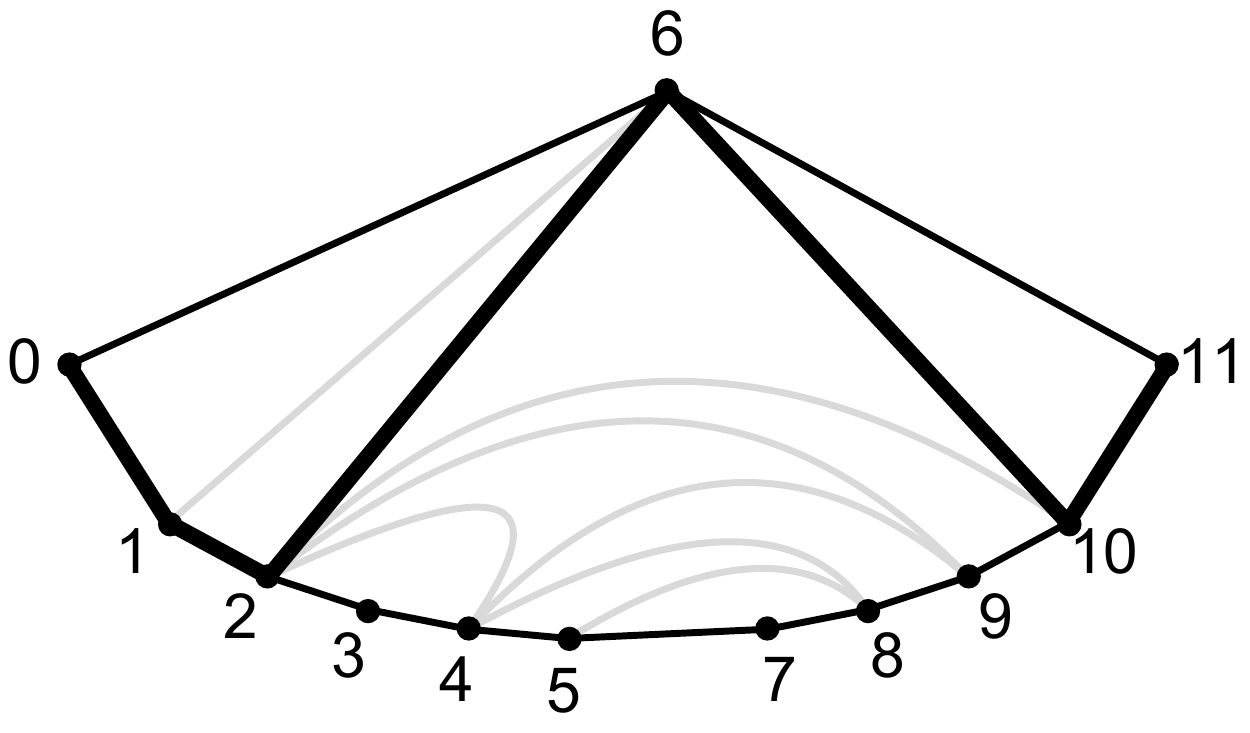} 
&\includegraphics[width=.3\textwidth]{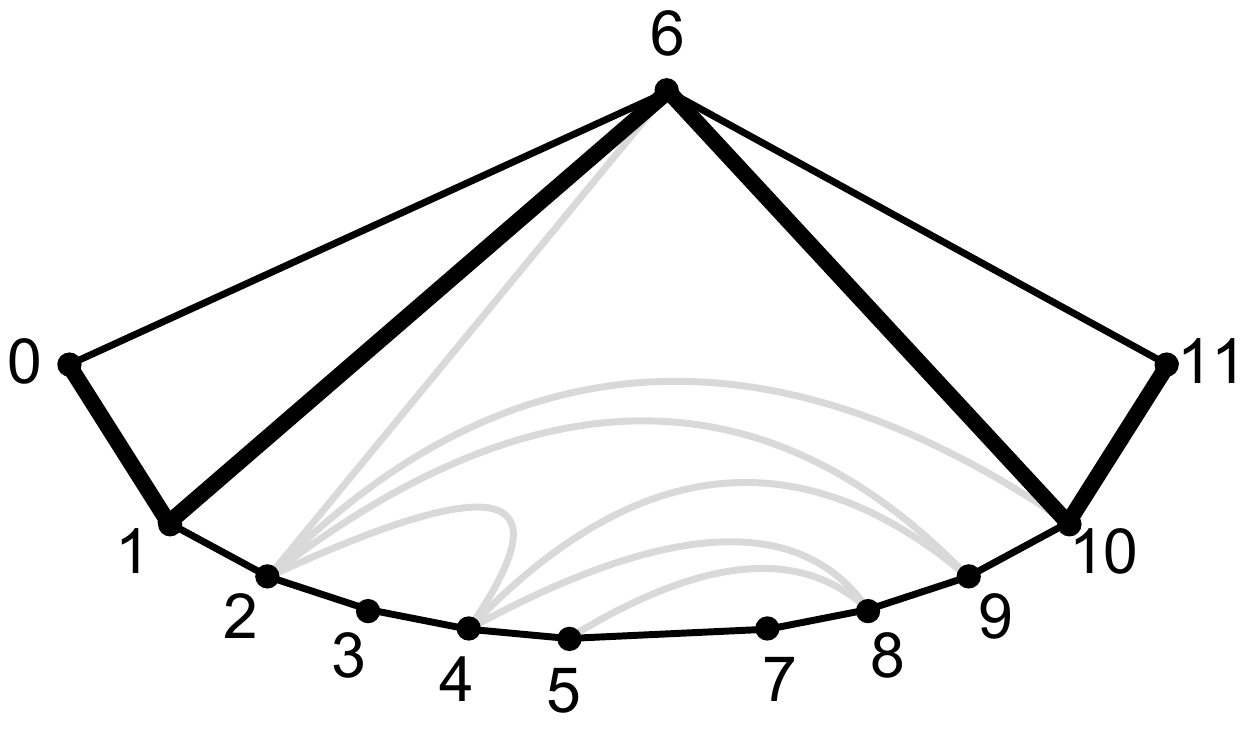}\\
\includegraphics[width=.3\textwidth]{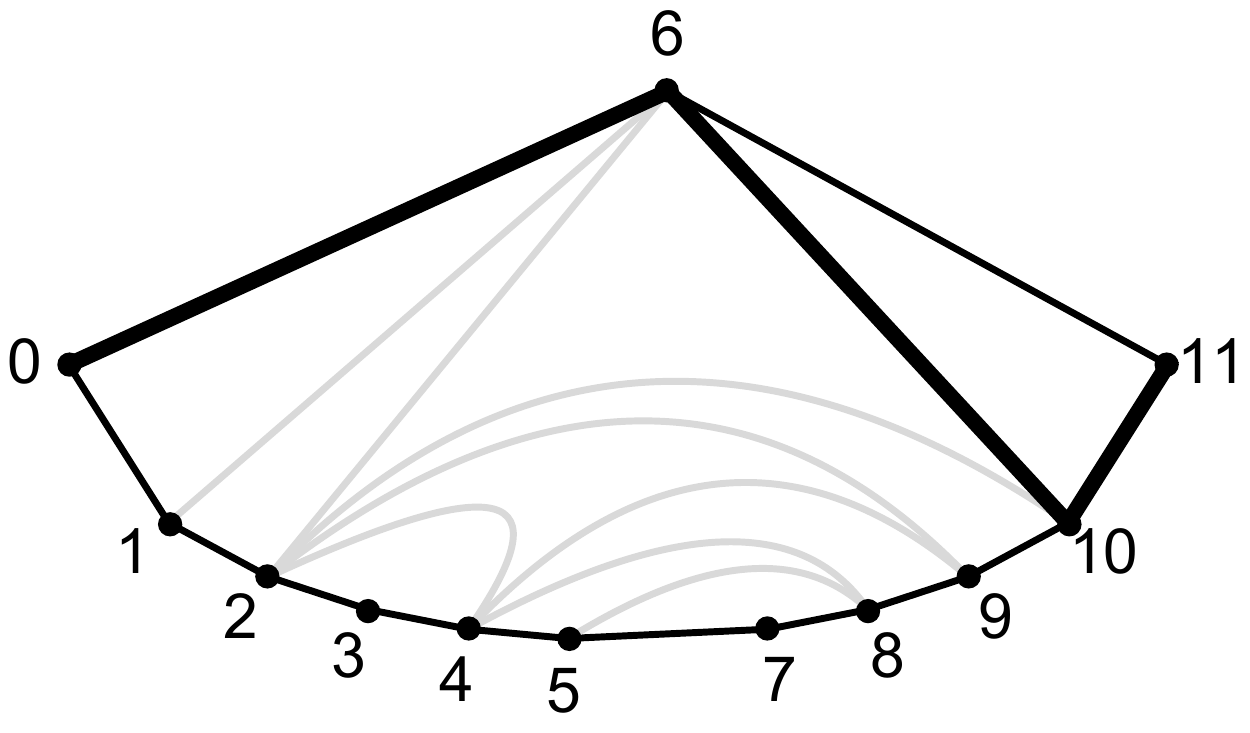}
&\includegraphics[width=.3\textwidth]{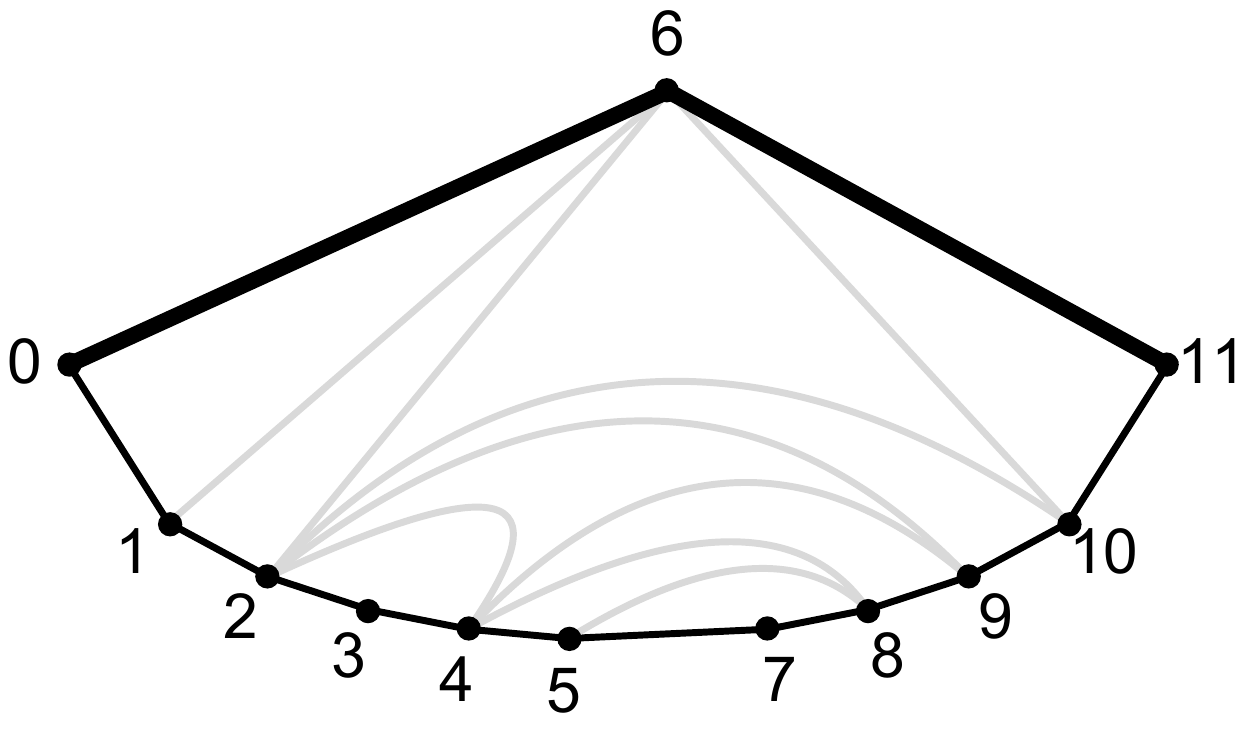}
&\includegraphics[width=.3\textwidth]{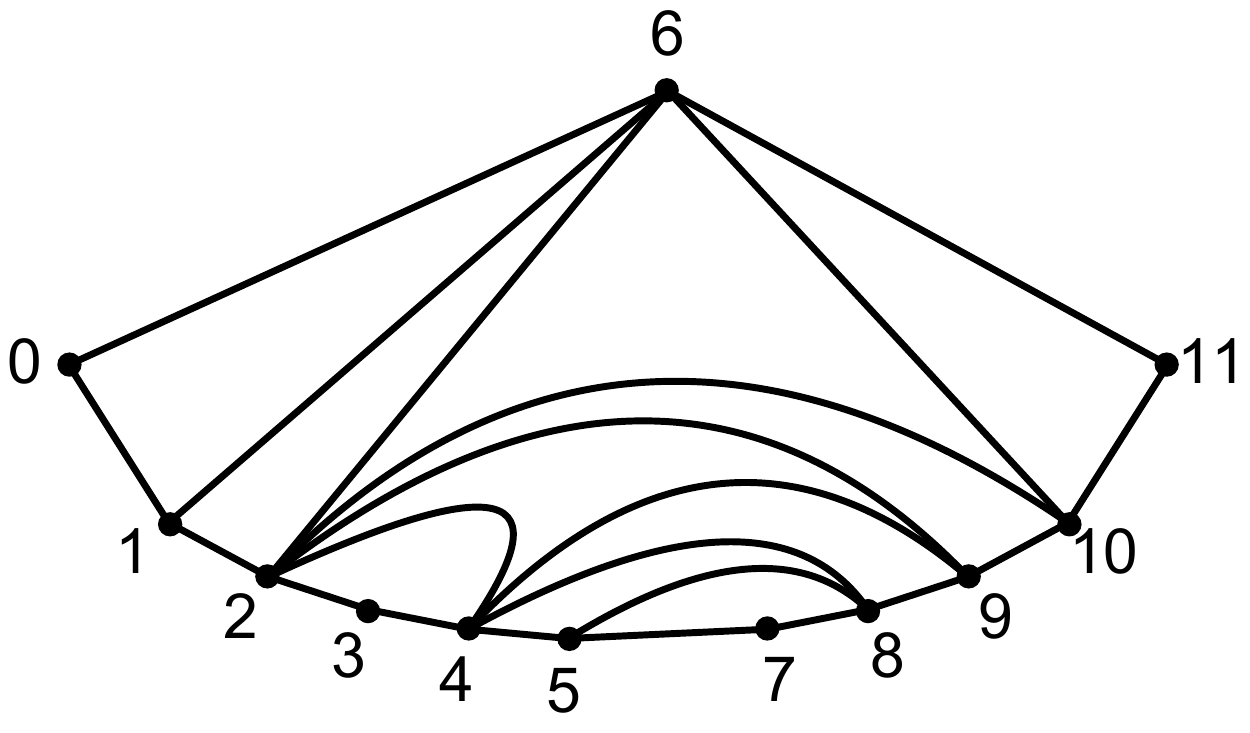}        
\end{array}\]

\caption{An illustration of the bijection $\Tri_c$ between an element $w$ and $\Tri_c(w)$, for $c=s_6s_7s_8s_9s_1s_2s_3s_4s_5$ and $w$ with one-line notation $3,7,5,8,4,9,6,2,1,10$.}
\label{fig:c_sort_to_triangulation}
\end{figure}

We refer the reader to Table 1 of~\cite{ceballos2014subword} for the corresponding map from subwords to triangulations, whereby a unique diagonal of an $(n+2)$-gon is associated to each facet of the associahedron (by Theorem~\ref{prop:parabolic_on_asoc}, this amounts to associating a diagonal to each letter of the word $\mathsf{Q}=\mathsf{cw_o(c)}$).  The important point we require is that---up to a global conventional relabeling of the unconventionally-labeled $(n+2)$-gon---a $c$-sortable element $w$ and its Cambrian rotation are both assigned the same triangulation (note that the description in~\cite{ceballos2014subword} has already imposed this global relabeling).  As a particular case, applying Cambrian rotation in the order given by a fixed reduced word for $c$ gives a map from $\Asoc(A_{n-1},c)$ to $\Asoc(A_{n-1},c)$ that corresponds to simply \emph{rotating} the triangulation.

\subsection{A Combinatorial Projection}

In this section, we give a slightly modified version of D.~Sleator, R.~Tarjan, and W.~Thurston's type $A_n$ combinatorial projection.  

We will label the simple reflections so that $s_i$ has cycle notation $(i,i+1)$.  Let \[c_{i} = s_is_{i+1}\cdots s_{n-1} s_1 s_2 \cdots s_{i-1} = (1,2,\ldots,i-1,i+1,\ldots,n,i),\] so that we may take the single upper vertex of our unconventionally-labeled $(n+2)$-gon corresponding to $c_{i}$ to be the upper vertex $i$.  

Given a $(n+2)$-gon with vertices labeled counterclockwise in the usual way by $0,1,2,\ldots, n+1$, and given a diagonal $f=(f_1,f_2)$ with $f_1<f_2$, there are exactly two ways relabel the vertices of this $(n+2)$-gon so that the labeling corresponds to some $c_{i}$ and so that the new labels of the diagonal $f$ are $i$ and $i+1$.  (One may take either the vertex directly counterclockwise of $f_1$ or directly counterclockwise of $f_2$ to be the vertex whose new label is $0$, and from there label vertices counterclockwise by $1,2,\ldots,i-1,i+1,i+2,\ldots,n+1,i$).  Choose one---this is the choice to project to either the parabolic subgroup $A_i \times A_{n-i-1}$ or to the parabolic subgroup $A_{n-i-1} \times A_{i}$.  That we have a choice at all is, as usual, an artifact of the cute but annoying fact that $w_o s_i w_o = s_{n-i}$ in type $A_{n-1}$.  We note that this choice of relabeling is irrelevant: for either choice, our projection map will define the same triangulation up to rotation.




We now give the combinatorial projection $\projCamb_{i}$.  Fix the diagonal $(i,i+1)$ of the labeled $(n+2)$-gon arising from $c_{i}$, and define $D_{i}$ to be the set of interior diagonals, specified as pairs of vertices.  Given a diagonal $(a,b)$ with $a<b$, we set \[\projCamb_{i}((a,b)):=\begin{cases}\{(a,i+1), (i,b)\} \cap D_{i} & \text{if } a<i \text{ and } b>i\\ \{(a,b)\} & \text{otherwise.} \end{cases}\]  Given a triangulation $T=\{(a_p,b_p) : 1 \leq p \leq n-1\}$, we define \[\projCamb_i(T):= \{(i,i+1)\} \cup \bigcup_p \projCamb_{i}((a_p,b_p)).\]  It is easy to check, using the same reasoning as in~\cite{ceballos2014subword}, that $\projCamb_i$ is a normalization map.  An example is given by the triangulations in Figure~\ref{fig:projections_eq}.

\begin{remark}
We stress here that this differs from the projection in~\cite{sleator1988rotation,ceballos2014subword}, which in our setup would be given by 
\[N_{(i,i+1)}((a,b)):=\begin{cases}\{(a,x), (x,b)\} \cap D_{i} & \text{if } a<i \text{ and } b>i\\ \{(a,b)\} & \text{otherwise,} \end{cases}\] where $x$ is chosen beforehand to (always) be either $i$ or $i+1$.  Our projection has the benefit of eliminating this choice.
\end{remark}





\subsection{Equivalence of Projections}

Using the bijection $\Tri_{c_i}$, we will now show that our modification $\projCamb_{i}$ of D.~Sleator, R.~Tarjan, and W.~Thurston's combinatorial projection  recovers the geometric projection $\projCamb_{W_{\langle i \rangle}}$ defined in Section~\ref{sec:main_theorem}.

We fix $c$ to be the $n$-cycle $c_1=s_1s_2\cdots s_{n-1} = (1,2,\ldots,n)$.  By the discussion in Section~\ref{sec:orient}, any facet of $\Asoc(A_{n-1},c)$ may be rotated to become the $c_{i}$-sortable elements in the standard parabolic subgroup $W_{\langle s_i \rangle}$, for some $i$.  This face is specified by the diagonal $(i,i+1)$.

\begin{theorem}
\label{thm:proj_eq}
	Let $w$ be a $c_{i}$-sortable element.  Then \[\Tri_{c_i}(\projCamb_{W_{\langle s_i \rangle}}(w)) = \projCamb_i(\Tri_{c_i}(w)).\]	
\end{theorem}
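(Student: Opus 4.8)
The plan is to prove the identity by unwinding both sides through Reading's bijection $\Tri_{c_i}$ and matching them diagonal by diagonal, so the first step is to record the geometric content of the algebraic projection. Removing $s_i=(i,i+1)$ from the diagram of $A_{n-1}$ splits it into the two factors of the Young subgroup $W_{\langle s_i\rangle}$, which permute the value blocks $\{1,\dots,i\}$ and $\{i+1,\dots,n\}$ among themselves; hence $w_{\langle s_i\rangle}$ sends positions $1,\dots,i$ to values $1,\dots,i$ and positions $i+1,\dots,n$ to values $i+1,\dots,n$. Using $\inv(w_{\langle s_i\rangle})=\inv(w)\cap\Phi^+(W_{\langle s_i\rangle})$ (the same parabolic fact already used in the proof of Proposition~\ref{prop:edges_on_sort}), this means $\projCamb_{W_{\langle s_i\rangle}}$ keeps every inversion internal to a block and destroys exactly the comparisons between a low value and a high value: concretely, it records the subsequence of entries $\le i$ and the subsequence of entries $>i$ of the one-line notation of $w$, each kept in its relative order, and lists them in positions $1,\dots,i$ and $i+1,\dots,n$. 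Since $w_{\langle s_i\rangle}$ is a vertex of the facet $(i,i+1)$, the triangulation $\Tri_{c_i}(w_{\langle s_i\rangle})$ contains the facet diagonal $(i,i+1)$, matching that $\projCamb_i$ always adjoins $(i,i+1)$.

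Second, I would set up the dictionary between one-line notation and diagonals. For $c_i$ the cycle $(1,2,\dots,i-1,i+1,\dots,n,i)$ has the single upper number $i$, so Reading's path construction is as simple as possible: reading $w_1\cdots w_n$ from left to right, every entry other than the unique $i$ deletes a lower vertex and the entry $i$ inserts the upper vertex, and tracing the resulting nested paths records all diagonals of the triangulation. I would extract from this (or directly from Table~1 of~\cite{ceballos2014subword}) the precise rule giving which pair $(a,b)$ of polygon vertices is produced, after the global relabeling identifying the unconventional $c_i$-labeling with the counterclockwise labeling $0,1,\dots,n+1$ used to define $\projCamb_i$. In this dictionary the facet diagonal $(i,i+1)$ is the one attached to the initial letter $\mathsf{Q}_1=s_i$ (Proposition~\ref{prop:parabolic_on_asoc}), and a diagonal crosses the cut between $i$ and $i+1$, i.e. has endpoints $a<i<b$, exactly when it records a comparison between a low entry and a high entry of the one-line notation.

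Third comes the matching, which I expect to be the main obstacle. Combining the two descriptions, I would check that replacing $w$ by $w_{\langle s_i\rangle}$ transforms the diagonals of $\Tri_{c_i}(w)$ precisely as $\projCamb_i$ prescribes: a diagonal with both endpoints weakly on one side of the cut records an inversion internal to a block, is therefore preserved by the projection, and lands in the ``otherwise'' branch of $\projCamb_i$; whereas a crossing diagonal $(a,b)$ with $a<i<b$ records a low--high comparison, is destroyed by the block projection, and must be shown to degenerate to exactly the pair of wall diagonals $\{(a,i+1),(i,b)\}\cap D_i$ incident to the facet $(i,i+1)$. The delicate points are the boundary and degenerate cases, where one of $(a,i+1)$ or $(i,b)$ fails to be interior and is correctly discarded by the intersection with $D_i$, together with the bookkeeping of the two freely chosen vertices at $x$-coordinates $1$ and $n$; this is where one must simply be careful, tracking through the path construction how each deleted/inserted vertex reattaches after block projection.

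Finally, I would observe that the relabeling ambiguity coming from the fact that $w_o s_i w_o = s_{n-i}$ is harmless, since, as recorded in Section~\ref{sec:triangulations}, both projections are only ever defined up to rotation of the triangulation; this is exactly the content of the earlier remark that the two choices in defining $\projCamb_i$ give the same triangulation up to rotation. Moreover, because $\Tri_{c_i}\circ\projCamb_{W_{\langle s_i\rangle}}$ and $\projCamb_i\circ\Tri_{c_i}$ are both normalization maps for the facet $(i,i+1)$, the diagonal-by-diagonal verification can be propagated economically along the edges of the flip graph rather than carried out vertex by vertex, which keeps the degenerate casework of the previous paragraph to a minimum.
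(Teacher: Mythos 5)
Your overall route is the paper's: characterize $W_{\langle s_i \rangle}$ by its one-line notation, use $\inv(\projCamb_{W_{\langle s_i \rangle}}(w)) = \inv(w) \cap \inv\left((w_o)_{\langle s_i \rangle}\right)$ to see that the projection concatenates the subsequence of values $\leq i$ with the subsequence of values $>i$, each in its relative order, and then match diagonals through $\Tri_{c_i}$, with crossing diagonals snapping to the wall pair $\{(a,i+1),(i,b)\} \cap D_i$. But at exactly the step you yourself flag as ``the main obstacle,'' the argument stops being a proof: your third paragraph describes what must be checked without checking it. The paper closes this with one concrete observation you are missing: because all low values of $u=\projCamb_{W_{\langle s_i \rangle}}(w)$ are read before any high value, the lower vertex $i+1$ lies on every path $P_k$ for $1 \leq k \leq i$, and the unique upper vertex $i$ lies on every path $P_k$ for $i+1 \leq k \leq n$. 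This decouples the two halves of the polygon: the first $i$ steps of the path construction for $u$ perform the same edits as the low-value steps for $w$ on the sub-polygon left of the wall, with crossing edges truncated at $i+1$ (note $i$ is read after the relevant low values in both $w$ and $u$, since their relative order is preserved), and symmetrically on the right---which yields the diagonal-by-diagonal identity at once, boundary cases included (e.g.\ $(i-1,i+1)$ is a boundary edge of the unconventionally-labeled polygon, correctly discarded by $\cap\, D_i$).

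Moreover, the shortcut you propose in your final paragraph is unsound. Being a normalization map for the facet $(i,i+1)$ does not determine a map: the paper's own Remark exhibits the Sleator--Tarjan--Thurston maps $N_{(i,i+1)}$ with $x=i$ and with $x=i+1$, which are distinct normalization maps for the \emph{same} facet, and both differ from $\projCamb_i$. Nor does pointwise agreement propagate along flip-graph edges: if $\phi,\psi$ are normalization maps with $\phi(v)=\psi(v)$ and $v \to v'$, Property (2) of Definition~\ref{def:normalization_map} permits $\phi$ to contract the edge while $\psi$ sends it to a genuine edge, so $\phi(v') \neq \psi(v')$ is entirely consistent with the axioms. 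The claim that ``both composites are normalization maps'' therefore cannot substitute for, or even economize, the direct verification; you must either carry out the matching of your third paragraph in full or supply the decoupling observation above.
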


An example is given in Figure~\ref{fig:projections_eq}.

\begin{figure}[htbp]
\[\begin{array}{cc}
\includegraphics[width=.45\textwidth]{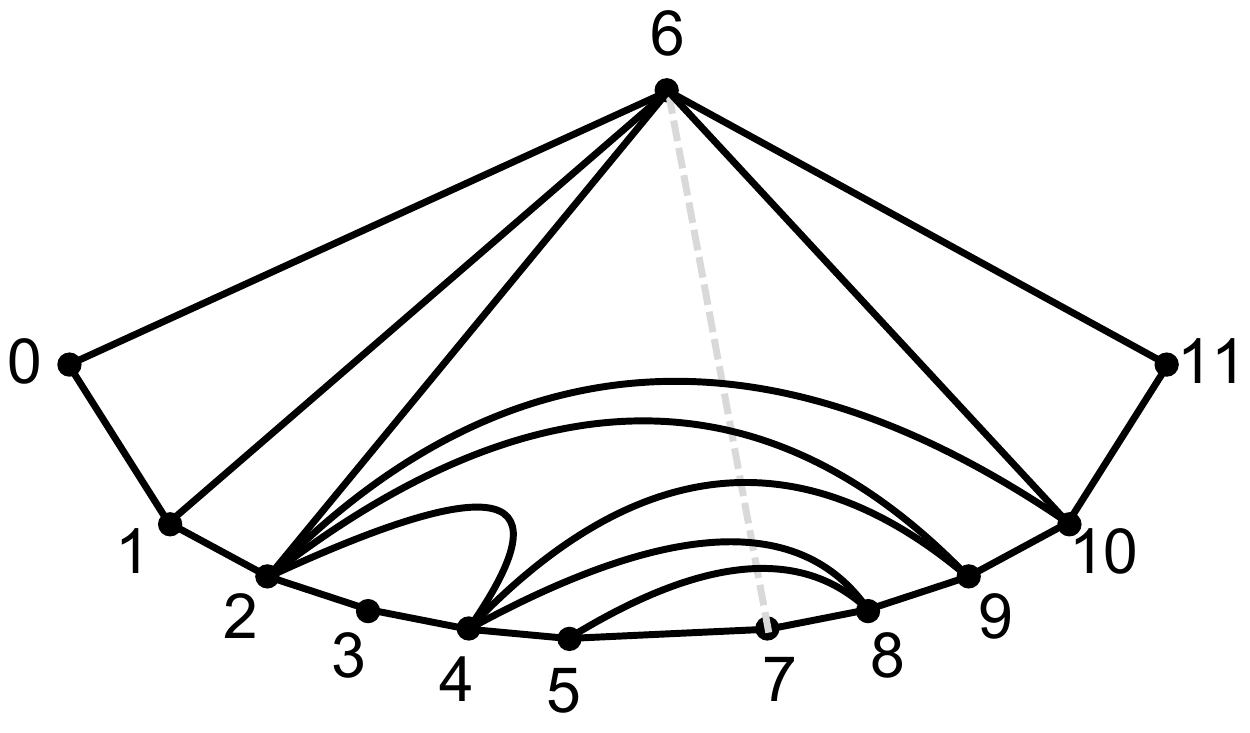} & \includegraphics[width=.45\textwidth]{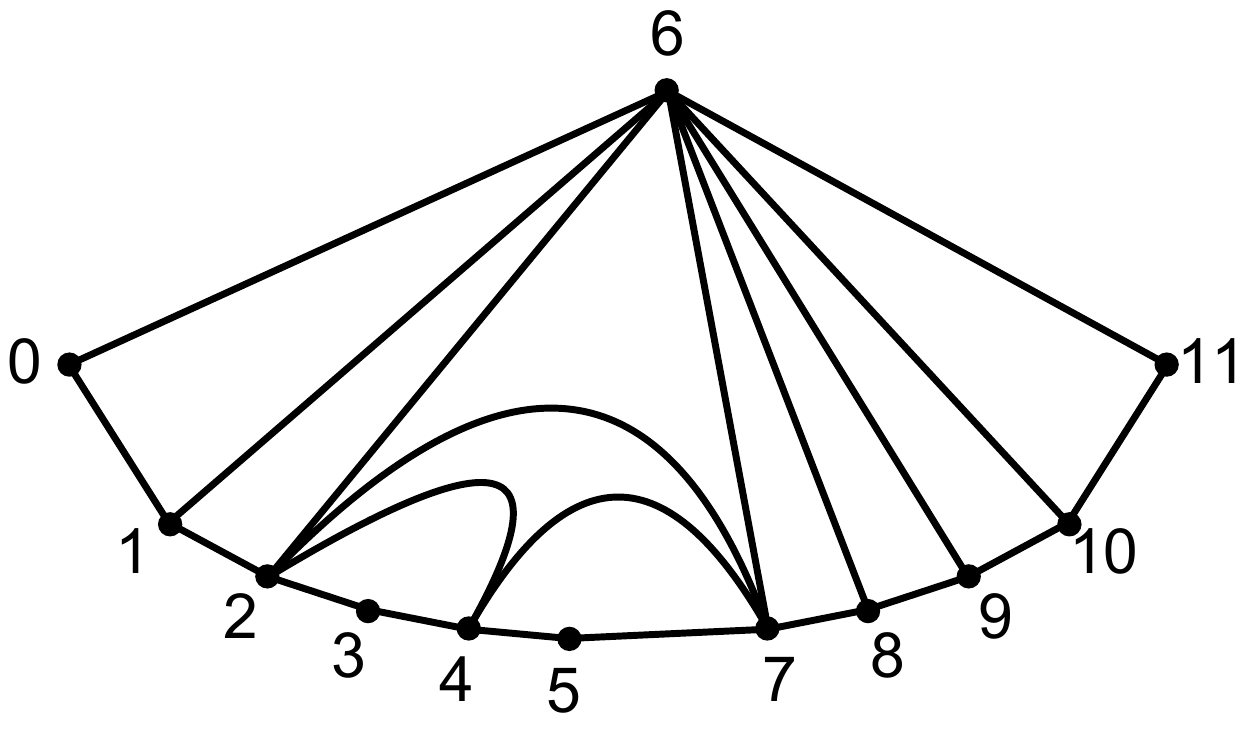} \\
w = 3,7,5,8,4,9,6,2,1,10  & \projCamb_{W_{\langle s_i \rangle}}(w) = 3,5,4,6,2,1 | 7,8,9,10
\end{array}\]

\caption{On the left is a permutation $w$ below its corresponding triangulation.  On the right is the permutation $\projCamb_{W_{\langle s_i \rangle}}(w)$ below its triangulation, illustrating Theorem~\ref{thm:proj_eq}}    
\label{fig:projections_eq}
\end{figure}

\begin{proof}
We first establish the necessary properties of one-line notation, to better understand the map from sortable elements to triangulations.

It is easy to see that the parabolic subgroup $W_{\langle s_i \rangle}$ consists of those permutations whose one-line notation has the property that each letter $1,2,\ldots,i$ is to the left of all letters $i+1,i+2,\ldots,n$.  Then by the bijection between $c_{(i)}$-sortables and triangulations above, the $c_{(i)}$-sortable elements that lie in $W_{\langle s_i \rangle}$ correspond to exactly those triangulations that have the diagonal $(i,i+1)$.


Let $w = w_1 w_2 \cdots w_n$ be the one-line notation of $w$, let $w_{j_1}w_{j_2}\cdots w_{j_i}$ be its (in-order) restriction to those $w_j \leq i$, and let $w_{j_{i+1}}w_{j_{i+2}}\cdots w_{j_{n}}$ be its restriction to those $w_k > i$.  Recall that \[\inv(\projCamb_{W_{\langle s_i \rangle}}(w))) = \inv(w) \cap \inv\left((w_o)_{\langle s_i \rangle}\right).\]   Then $u=\projCamb_{W_{\langle s_i \rangle}}(w)))$ has one-line notation $u_1 u_2 \cdots u_n$, where the first $i$ letters are the numbers from $1$ to $i$ and their relative ordering in $w$ is preserved, and similarly for the last $n+1-i$ letters.  Precisely, \[u_1 u_2 \cdots u_i = w_{j_1} w_{j_2} \cdots w_{j_i} \text{ and } u_{i+1} u_{i+2} \cdots u_n = w_{j_{i+1}}w_{j_{i+2}}\cdots w_{j_{n}}.\]

The theorem now follows immediately by noting that the lower vertex $i+1$ is contained in each of the paths $P_k$ for $1 \leq k \leq i$, while the upper vertex $i$ is contained in each of the paths $P_k$ for $i+1 \leq k \leq n$.  Furthermore, the above considerations make it clear that this equality actually holds for any element of $A_{n-1}$.
\end{proof}






\section{Final Remarks}

It would be interesting to compare the combinatorial projection maps defined for types $B$ and $D$ in~\cite{ceballos2014diameter} with the combinatorics one could extract from our geometric projection.  One would also expect that such a result holds for the Fuss-analogues of associahedra (despite the absence of hyperplane geometry in this situation, the projection map would be defined by projecting onto what passes for a standard parabolic in the positive Artin monoid).  Finally, we note that the similarity of the proofs for the $W$-permutahedron and the $W$-associahedron suggest that the proper generality for an in-your-face theory has not been found.

\bibliographystyle{amsalpha}
\bibliography{inyourface}
\end{document}